\documentclass[dvipsnames]{article}
\usepackage{amsmath,amssymb,amsthm,bm}
\usepackage[a4paper]{geometry}
\usepackage{algorithm,algorithmic}
\usepackage{caption}
\usepackage{graphicx}
\usepackage[T1]{fontenc}
\usepackage{multirow}
\usepackage{nicematrix}
\usepackage{authblk}
\usepackage{cite}

\usepackage{booktabs}

\usepackage[colorlinks,
linkcolor=MidnightBlue,
citecolor=ForestGreen]{hyperref}
\usepackage{cleveref}
\usepackage{aliascnt}

\graphicspath{{figs/}}

\newenvironment{keywords}{\medskip\textbf{Keywords:}}{}

\newenvironment{MSCcodes}{\medskip\textbf{AMS subject classifications (2020).}}{}

\theoremstyle{plain}
\newtheorem{theorem}{Theorem}

\newaliascnt{lemma}{theorem}
\newtheorem{lemma}[lemma]{Lemma}
\aliascntresetthe{lemma}

\newaliascnt{proposition}{theorem}
\newtheorem{proposition}[proposition]{Proposition}
\aliascntresetthe{proposition}

\newaliascnt{corollary}{theorem}
\newtheorem{corollary}[corollary]{Corollary}
\aliascntresetthe{corollary}

\theoremstyle{definition}

\theoremstyle{remark}
\newtheorem{remark}{Remark}

\definecolor{bkgndcolor}{rgb}{1,1,1}
\definecolor{bkgndcolor}{rgb}{0.78,0.93,0.8}

\usepackage{tcolorbox}
\tcbuselibrary{skins}
\tcbuselibrary{breakable}
\tcbset{shield externalize}

\DeclareFontFamily{U}{mathx}{}
\DeclareFontShape{U}{mathx}{m}{n}{<-> mathx10}{}
\DeclareSymbolFont{mathx}{U}{mathx}{m}{n}
\DeclareMathAccent{\widecheck}{0}{mathx}{"71}
\makeatother

\DeclareMathOperator{\cond}{\kappa}
\DeclareMathOperator{\diag}{diag}
\DeclareMathOperator{\Null}{Null}

\DeclareMathOperator{\Span}{span}
\DeclareMathOperator{\trace}{tr}

\DeclareMathOperator{\nnz}{nnz}
\DeclareMathOperator{\sv}{\sigma}

\DeclareMathOperator{\dist}{dist}

\renewcommand{\Re}{\mathop{\mathrm{Re}}}

\newcommand{\fro}{\mathsf F}

\newcommand*{\innerp}[2]{\left\langle#1, #2\right\rangle}
\newcommand*{\trans}{^{\top}}

\newcommand*{\pinv}{^{\dagger}}

\newcommand*{\abs}[1]{\lvert#1\rvert}
\newcommand*{\norm}[1]{\lVert#1\rVert}

\def\adots{\mathinner{\mkern2mu\raise1pt\hbox{.}\mkern2mu
    \raise4pt\hbox{.}\mkern2mu\raise7pt\hbox{.}\mkern1mu}}

\newcommand{\subroutine}[1]{\texttt{#1}}
\newcommand{\algorithmicfunction}{\textbf{Function:}}
\newcommand{\FUNCTION}[1]{\item[\algorithmicfunction]#1}

\newcommand*{\Mat}[1]{\bm{#1}}
\newcommand*{\tMat}[1]{\widetilde{\Mat{#1}}}

\newcommand*{\bigO}{\mathcal{O}}

\newcommand*{\Subspace}[1]{\mathcal{#1}}

\newcommand*{\bbR}{\mathbb{R}}
\newcommand*{\bbC}{\mathbb{C}}

\newcommand*{\zeromat}{\Mat{0}}
\newcommand*{\I}{\Mat{I}}
\newcommand*{\A}{\Mat{A}}
\newcommand*{\tA}{\tMat{A}}

\newcommand*{\V}{\Mat{V}}
\newcommand*{\lam}{\lambda}
\newcommand*{\Lam}{\Mat{\Lambda}}

\newcommand*{\X}{\Mat{X}}

\newcommand*{\AX}{\Mat{F}}

\newcommand*{\tX}{\widetilde{\X}}
\newcommand*{\tAX}{\widetilde{\AX}}

\newcommand*{\n}{m}
\newcommand*{\subspacedim}{n}
\newcommand*{\nev}{n_{\mathrm{ev}}}
\newcommand*{\nex}{n_{\mathrm{ex}}}

\newcommand*{\mynev}{n_{\mathrm{ev, now}}}

\renewcommand*{\P}{\Mat{P}}
\newcommand*{\subspaceU}{\Subspace{U}}
\newcommand*{\U}{\Mat{U}}

\newcommand*{\skmat}{\Mat{S}}
\newcommand*{\skdim}{s}
\newcommand*{\sksubspace}{\Subspace{U}}
\newcommand*{\skvec}{\Mat{u}}
\newcommand*{\skeps}{\varepsilon}
\newcommand*{\skinnerp}[2]{\innerp{#1}{#2}_{\skmat}}
\newcommand*{\sknorm}[1]{\norm{#1}_{\skmat}}
\newcommand*{\skperp}{\perp_{\skmat}}

\newcommand*{\skX}{\tX}
\newcommand*{\skAX}{\tAX}

\newcommand*{\skwhX}{\widetilde{\X}_{\skmat}}
\newcommand*{\whX}{\XS}
\newcommand*{\whiteingR}{\Mat{G}}

\newcommand*{\constQR}{c_{\mathrm{QR}}}

\newcommand*{\Z}{\Mat{Z}}

\newcommand*{\Th}{\Mat{\Theta}}
\newcommand*{\T}{\Mat{T}}

\newcommand*{\Ap}{\A_{\mathrm{p}}}
\newcommand*{\Apsk}{\tA_{\mathrm{p}}}

\newcommand*{\R}{\Mat{R}}
\newcommand*{\tR}{\tMat{R}}
\newcommand*{\skR}{\tR}

\newcommand*{\resnorm}{\rho}
\newcommand*{\skresnorm}{\rho_{\skmat}}

\newcommand*{\M}{\Mat{M}}

\newcommand*{\used}{\mathrm{used}}

\newcommand*{\Rused}{\R_{\used}}
\newcommand*{\skRused}{\tR_{\used}}

\newcommand*{\tol}{\tau}

\newcommand*{\QRfunc}{\subroutine{qr\_econ}}
\newcommand*{\normestfunc}{\subroutine{normest}}

\newcommand*{\RRfunc}{\subroutine{RR}}
\newcommand*{\sRRfunc}{\subroutine{sRR\_schur}}

\newcommand*{\SIfunc}{\subroutine{SI}}
\newcommand*{\sSIfunc}{\subroutine{sSI}}

\newcommand*{\TrMINfunc}{\subroutine{TraceMIN}}
\newcommand*{\sTrMINfunc}{\subroutine{sTraceMIN}}
\newcommand*{\TrMINP}{\Mat{\Pi}}
\newcommand*{\TrMINDel}{\Mat{\Delta}}
\newcommand*{\TrMINPsk}{\TrMINP_{\skmat}}

\newcommand*{\iterind}[1]{^{[#1]}}

\newcommand*{\C}{\Mat{C}}
\newcommand*{\XS}{\X_{\skmat}}
\newcommand*{\RS}{\R_{\skmat}}

\newcommand*{\sklower}{\alpha_{\skeps}}
\newcommand*{\skupper}{\beta_{\skeps}}
\newcommand*{\skspnorm}{L}
\newcommand*{\x}{\Mat{x}}

\newcommand*{\E}{\Mat{E}}
\newcommand*{\ubE}{\gamma}
\newcommand*{\skepsneighbor}{\skeps_{*}}
\newcommand*{\ubEneighbor}{\gamma_{*}}

\renewcommand*{\c}{\Mat{c}}
\newcommand*{\z}{\Mat{z}}
\newcommand*{\unitvec}{\Mat{e}}
\renewcommand*{\v}{\Mat{v}}

\newcommand*{\projVX}{\Mat{Q}}

\newcommand*{\TrMINDelS}{\TrMINDel_{\skmat}}
\newcommand*{\Y}{\Mat{Y}}
\newcommand*{\YS}{\Y_{\skmat}}

\newcommand*{\Perturb}{\Mat{Y}}
\newcommand*{\ZS}{\Z_{\skmat}}

\newcommand*{\evgap}{\delta}
\newcommand*{\Perturbnorm}{\eta}
\newcommand*{\PerturbVnorm}{\eta_{1}}
\newcommand*{\N}{\Mat{N}}
\newcommand{\mygap}{d}

\newcommand*{\nrhs}{n_{\mathrm{rhs}}}
\newcommand*{\W}{\Mat{W}}
\newcommand*{\tW}{\tMat{W}}

\title{Real-Arithmetic Sketched Eigensolvers for Symmetric Matrices}

\author[1]{Jingyu Liu}
\author[2]{Yuqi Liu}
\affil[1]{School of Mathematical Sciences, Fudan University, 220 Handan Road, Shanghai 200433, China. jyliu22@m.fudan.edu.cn}
\affil[2]{School of Computer Science, University of Leeds, Woodhouse Lane, Leeds LS2 9JT, United Kingdom. Y.Liu12@leeds.ac.uk}
\date{\today}

\begin{document}

\maketitle

\begin{abstract}
In this work, we develop sketched variants of subspace iteration
and TraceMIN, called sSI and sTraceMIN, for real symmetric eigenvalue
problems by combining basis whitening with the sketched
Rayleigh--Ritz~(sRR) procedure.
The nonsymmetric projected matrix in sRR may have nonreal eigenpairs,
leading to complex arithmetic and additional computational costs.
We overcome this difficulty using a real Schur formulation of sRR
that keeps the iterations in real arithmetic.
We further develop procedures for characteristic subspace selection
and convergence monitoring within this formulation.
On the theoretical side, we discuss suitable subspace embedding assumptions
for sketched eigensolvers that use a fixed sketching matrix throughout
the iterations.
Under a mild embedding assumption, we establish local convergence results for sSI and sTraceMIN.
Numerical experiments demonstrate runtime savings of up to \(39\%\) for sSI and \(21\%\)
for sTraceMIN relative to their standard counterparts on the test problems.
\end{abstract}

\begin{keywords}
symmetric eigenvalue problem, 
sketched eigensolver,
randomized eigensolver,
Schur decomposition,
sketched Rayleigh--Ritz procedure
\end{keywords}

\begin{MSCcodes}
65F15, 65F10, 65F50, 68W20
\end{MSCcodes}

\section{Introduction}
\label{sec:introduction}

In this work, we consider the symmetric eigenvalue problem~(EVP)
\begin{equation*}
\A \V = \V \Lam,
\end{equation*}
where \(\A \in \bbR^{\n \times \n}\) is a symmetric matrix,
\(\Lam \in \bbR^{\nev \times \nev}\) is a diagonal matrix
whose diagonal entries are the eigenvalues of interest,
and the columns of \(\V \in \bbR^{\n \times \nev}\) are the corresponding eigenvectors.
We focus primarily on large sparse matrices with \(\nev \ll \n\).
In particular, the algorithms and convergence analysis developed
in this paper target the \(\nev\) smallest eigenpairs of symmetric
positive definite~(SPD) matrices.

As one of the fundamental problems in numerical linear algebra,
the large-scale symmetric eigenvalue problem
has been extensively studied over the past several decades.
Representative algorithms include the subspace iteration~(SI) algorithm~\cite{Demmel_1997},
the Lanczos algorithm~\cite{Demmel_1997},
the Jacobi--Davidson~(JD) algorithm~\cite{Sleijpen_Van_der_Vorst_2000}, 
the locally optimal block preconditioned conjugate gradient~(LOBPCG) algorithm~\cite{Duersch_Shao_Yang_Gu_2018},
and the trace minimization (TraceMIN) algorithm~\cite{Sameh_Wisniewski_1982}.
These algorithms construct approximations to the desired eigenspace in different ways.

In these algorithms, a search subspace is represented by a matrix \(\X \in \bbR^{\n \times \subspacedim}\),
from which approximate eigenpairs are extracted using the Rayleigh--Ritz~(RR) procedure.
Maintaining a well-conditioned basis is important for the numerical stability of these computations.
This is commonly achieved by orthonormalizing \(\X\) before the RR procedure.

For large-scale problems with \(\n \gg 1\), however, 
orthogonalization can be very expensive 
and may account for a substantial fraction of the overall computational cost.
Reducing the orthogonalization cost in large-scale eigensolvers
therefore has received considerable attention.
Common strategies include 
reducing the frequency of orthogonalization in SI~\cite[section~5.1]{Saad_2011} 
and reducing the number of vectors that need to be orthogonalized by, for example, 
locking converged eigenvectors~\cite{Lehoucq_Sorensen_1996, Saad_2011}.

Randomized sketching provides a way to reduce the cost of orthogonalization
while controlling the conditioning of the basis.
Examples include the randomized Gram--Schmidt~(RGS) process~\cite{Balabanov_Grigori_2022, Balabanov_Grigori_2025}
and, more generally, basis whitening~\cite{Rokhlin_Tygert_2008, Nakatsukasa_Tropp_2024}.
Whitening constructs a sketch-orthonormal basis
\(\X \in \bbR^{\n \times \subspacedim}\),
i.e., \(\skX = \skmat \X\) is orthonormal.
Here, \(\skmat \in \bbR^{\skdim \times \n}\) is a sketching
matrix with \(\subspacedim \le \skdim \ll \n\).
For suitably chosen \(\skmat\), the sketch-orthonormal basis \(\X\)
is well-conditioned with high probability, while forming
\(\skmat\X\) remains inexpensive.
Hence, whitening can be much cheaper 
than standard QR factorization.
When the application of the sketching matrix
and computations in the sketched space 
contribute only lower-order costs,
its leading cost is approximately 
half that of classical or modified 
Gram--Schmidt~(CGS or MGS), 
and one quarter that of Householder QR 
with the orthonormal factor formed explicitly.

Nevertheless, the savings from whitening
may be offset by the subsequent RR procedure.
For a nonorthonormal basis \(\X\), 
the conventional RR procedure requires 
either explicitly orthonormalizing \(\X\) 
or solving the generalized eigenvalue problem~(GEVP)
\begin{equation*}
(\X\trans \A \X) \Z = (\X\trans \X) \Z \Th.
\end{equation*}
The former requires computing a standard QR factorization of \(\X\),
whereas the latter requires explicitly forming the Gram matrix \(\X\trans\X\).
Either operation involves inner products in dimension \(\n\)
and can erase the computational savings achieved by whitening.
A related trade-off has been observed in the recently proposed randomized block Davidson eigensolver~\cite{Gubler_Park_Bussy_Laute_Marzari_Timrov_Grigori_2026}.

Fortunately, the sketched RR~(sRR) procedure~\cite{Nakatsukasa_Tropp_2024} offers a way to use the basis \(\X\)
without orthogonalizing it or forming \(\X\trans \X\).
It replaces the Euclidean projection by a sketched one.
For a sketch-orthonormal basis \(\X\), its projected matrix is
\begin{equation*}
\Apsk = (\skmat \X)\trans (\skmat \A \X).
\end{equation*}
This construction replaces inner products in the original space by
cheaper ones in the sketched space.
Moreover, since \(\skmat\X\) is already computed during basis whitening, it can be reused without additional computation.
This makes it natural to combine basis whitening and sRR in eigensolvers.
In a recent work~\cite{Grigori_Park_Simunec_2026}, the authors applied this idea
to the JD algorithm for nonsymmetric EVPs.

However, such a direct combination may not work efficiently 
for the symmetric EVPs considered here.
Although \(\A\) is SPD, \(\Apsk\) is generally nonsymmetric.
As a result, the approximate eigenpairs produced by sRR may be complex.
For iterative eigensolvers, this may force 
all subsequent computations into complex arithmetic,
again offsetting the computational savings achieved by whitening.
This difficulty was also noted in~\cite[section~6.8]{Nakatsukasa_Tropp_2024}.

To keep the algorithms in real arithmetic, we use the real Schur decomposition,
\begin{equation*}
\Apsk = \Z \T \Z\trans,
\end{equation*}
instead of a spectral decomposition in sRR.
Here, \(\Z\) is orthogonal and \(\T\) is upper quasi-triangular.
The transformed basis \(\X \Z\) remains real and sketch-orthonormal,
allowing search subspaces to be represented and updated without forming complex eigenvectors.

The main algorithmic contribution of this paper is a framework
that combines whitening and the real Schur formulation of sRR
with the subspace updates of iterative eigensolvers,
while keeping the large-scale iterations in real arithmetic.
We apply this framework to SI and TraceMIN, obtaining the
sketched SI~(sSI) and sketched TraceMIN~(sTraceMIN) algorithms.
Several implementation issues that arise from the use of the real Schur form,
including
the extraction of the target characteristic subspace and the estimation of the corresponding residuals,
are also discussed in detail.
Numerical experiments compare the proposed methods with their
conventional counterparts in terms of convergence behavior,
accuracy, and computational cost.

We also provide theoretical analysis of the proposed methods.
Typically, the analysis of sketched eigensolvers assumes that
\(\skmat\) is a subspace embedding for \(\Span\{\X,\A\X\}\), or at least \(\Span\{\X\}\), throughout all iterations.
However, for the proposed eigensolvers, the sketching matrix \(\skmat\) is fixed throughout the iterations while \(\X\) can typically trial whole \(\bbR^{\n}\). 
Therefore such embedding assumptions are no longer a suitable choice.
Nevertheless, under a mild local embedding assumption, we show that sSI and sTraceMIN
generate the same search subspaces as their standard counterparts,
when the initial search subspaces are sufficiently 
close to the characteristic subspace spanned by the \(\nex\) target eigenvectors.
Additionally, we discuss the subspaces selected by real Schur sRR and RR,
deriving an upper bound on the gap between these two subspaces locally.

The remainder of this paper is organized as follows.
Section~\ref{sec:preliminaries} introduces the notation used throughout the paper and
reviews randomized sketching and basis whitening.
Section~\ref{sec:sketched_eigensolvers} presents the proposed algorithms,
and Section~\ref{sec:implementation} discusses their implementation.
Section~\ref{sec:convergence} presents the convergence analysis of the proposed algorithms.
Section~\ref{sec:experiments} presents numerical experiments.
Finally, Section~\ref{sec:conclusions} summarizes the results and outlines directions for future work.

\section{Preliminaries} \label{sec:preliminaries}

\subsection{Notation} \label{subsec:notation}

We use \(\norm{\cdot}_{2}\) to denote the Euclidean norm of a vector
or the spectral norm of a matrix, and \(\norm{\cdot}_{\fro}\)
to denote the Frobenius norm.
The Euclidean inner product is denoted by
\(\innerp{\Mat{x}}{\Mat{y}} = \Mat{y}\trans \Mat{x}\).
We use \(\sv_{\min}(\cdot)\) and \(\sv_{\max}(\cdot)\) to denote the smallest and largest singular values of a matrix, respectively.
The notation \(\cond_{2}(\cdot)\) denotes the condition number of a matrix with respect to the spectral norm.
The dagger symbol \(\pinv\) denotes the pseudoinverse.
A matrix \(\X \in \bbR^{\n \times \subspacedim}\) is called
orthonormal if \(\X\trans \X = \I_{\subspacedim}\).
Given a sketching matrix \(\skmat\), the sketched ``inner product'' and sketched ``norm'' 
are defined as \(\skinnerp{\Mat{x}}{\Mat{y}} = \innerp{\skmat \Mat{x}}{\skmat \Mat{y}}\) 
and \(\sknorm{\Mat{x}} = \sqrt{\skinnerp{\Mat{x}}{\Mat{x}}}\), respectively.
The notation \(\skperp\) indicates orthogonality with respect to the sketched inner product.
A matrix \(\X\) is called sketch-orthonormal if \(\skmat \X\) is orthonormal.
The distance between two subspaces
\(\subspaceU_{1}, \subspaceU_{2} \subseteq \bbR^{\n}\)
is defined as
\(\dist(\subspaceU_{1}, \subspaceU_{2}) = \|\P_{1} - \P_{2}\|_{2}\),
where \(\P_{1}\) and \(\P_{2}\) are the orthogonal projectors onto
\(\subspaceU_{1}\) and \(\subspaceU_{2}\), respectively.
Suppose \(\U_{1}\) and \(\U_{2}\) are orthonormal bases 
for \(\subspaceU_{1}\) and \(\subspaceU_{2}\), respectively.
Then, the distance between the two subspaces can also be expressed as
\(\dist(\subspaceU_{1}, \subspaceU_{2}) 
= \norm{(\I - \U_{1}\U_{1}\trans)\U_{2}}_{2}
= \norm{(\I - \U_{2}\U_{2}\trans)\U_{1}}_{2}\).

\subsection{Randomized Sketching} \label{subsec:sketching}

Let \(\sksubspace \subseteq \bbR^{\n}\) be a subspace.
A sketching matrix \(\skmat \in \bbR^{\skdim \times \n}\)
is an \(\skeps\)-subspace embedding for \(\sksubspace\) if
\begin{equation}\label{eq:subspace_embedding}
(1 - \skeps) \norm{\skvec}_{2}^{2}
\leq
\norm{\skmat \skvec}_{2}^{2}
\leq
(1 + \skeps) \norm{\skvec}_{2}^{2},
\qquad
\skvec \in \sksubspace,
\end{equation}
where \(0 < \skeps < 1\) is the distortion parameter and \(\skdim\)
is the sketch dimension, with \(\dim(\sksubspace) \leq \skdim \ll \n\).
Typically, \(\skmat\) is chosen as a random matrix such that~\eqref{eq:subspace_embedding} holds with high probability,
and the application of \(\skmat\) to a vector can be computed efficiently.
For example, a sparse sign matrix with a fixed number of nonzero entries
per column~\cite{Martinsson_Tropp_2020} allows this application to be computed
in \(\bigO(\n)\) operations.

\subsection{Basis Whitening} \label{subsec:basis_whitening}

Basis whitening uses a sketching matrix
to construct a well-conditioned basis for a given subspace~\cite{Nakatsukasa_Tropp_2024}.
Suppose \(\X \in \bbR^{\n \times \subspacedim}\) has full column rank.
If \(\skmat \in \bbR^{\skdim \times \n}\) is an \(\skeps\)-subspace embedding for \(\Span\{\X\}\),
then \(\skX = \skmat \X\) has full column rank.
Let
\begin{equation*}
\skX
=
\skwhX \whiteingR
\end{equation*}
be the QR factorization of \(\skX\),
where \(\skwhX \in \bbR^{\skdim \times \subspacedim}\) is orthonormal and \(\whiteingR \in \bbR^{\subspacedim \times \subspacedim}\) is nonsingular and upper triangular.
We denote this process as a subroutine \([\skwhX, \whiteingR] = \QRfunc(\skX)\).
The whitened basis
\begin{equation*}
\whX = \X \whiteingR^{-1}
\end{equation*}
is therefore sketch-orthonormal.
The nonsingularity of \(\whiteingR\) implies
\(\Span\{\whX\}= \Span\{\X\}\).
The subspace embedding property~\eqref{eq:subspace_embedding} also gives
\begin{equation*}
\cond_{2}(\whX)
\leq \sqrt{\frac{1 + \skeps}{1 - \skeps}},
\end{equation*}
meaning that \(\whX\) is well-conditioned if \(\skeps\) is not too close to \(1\).

For \(\n \gg \subspacedim\), computing a QR factorization of \(\X\) has leading cost
\(\constQR \n \subspacedim^{2}\) operations.
Here, \(\constQR = 2\) for CGS and MGS,
and \(\constQR = 4\) for CGS2~(CGS with one reorthogonalization)
and Householder QR with the orthonormal factor formed explicitly.
Whitening requires three steps: 
applying \(\skmat\) to \(\X\),
computing the QR factorization of \(\skX\), 
and computing
\(\whX = \X \whiteingR^{-1}\) by a triangular solve.
For sparse sign matrices in Section~\ref{subsec:sketching},
the first step costs \(\bigO(\n \subspacedim)\) operations.
The QR factorization of \(\skX\) costs \(\bigO(\skdim \subspacedim^{2})\) operations,
and the triangular solve has leading cost \(\n \subspacedim^{2}\).
When the first two steps contribute only lower-order costs,
the leading cost of whitening
is therefore approximately \(\n \subspacedim^{2}\),
which means that whitening reduces the leading cost by a factor of \(2\)--\(4\).

\section{Sketched Subspace Iteration and TraceMIN}
\label{sec:sketched_eigensolvers}

We combine whitening with a real Schur formulation of sRR to construct two eigensolvers: sSI and sTraceMIN.
Throughout this section, we seek the \(\nev\) smallest eigenpairs of an SPD matrix \(\A \in \bbR^{\n \times \n}\).
The initial guess is a full-column-rank matrix \(\X \in \bbR^{\n \times \nex}\), where \(\nev < \nex \ll \n\).
Each method uses a fixed sketching matrix \(\skmat\), and all sketched bases to be whitened are assumed to have full column rank.

\subsection{Sketched Rayleigh--Ritz via Real Schur Decomposition} \label{subsec:sRR}

For a search basis \(\X \in \bbR^{\n \times \nex}\), the RR procedure solves the projected eigenvalue problem \(\Ap \Z = \Z \Th\),
where \(\Ap = \X\pinv \A \X\) is the solution of the least-squares problem
\begin{equation*}
  \min_{\M \in \bbR^{\nex \times \nex}} \norm{\A \X - \X \M}_{\fro}.
\end{equation*}
sRR replaces the least-squares problem by its sketched version~\cite[section~6.3]{Nakatsukasa_Tropp_2024}
\begin{equation*}
  \min_{\M \in \bbR^{\nex \times \nex}} \norm{\skmat (\A \X - \X \M)}_{\fro},
\end{equation*}
whose solution is \(\Apsk=(\skmat \X)\pinv (\skmat \A \X)\).
In the remainder of this subsection, we assume that \(\X\) is sketch-orthonormal.
The sketched projected matrix then simplifies to
\begin{equation*}
    \Apsk = (\skmat \X)\trans (\skmat \A \X).
\end{equation*}
Once \(\skmat \X\) and \(\skmat \A \X\) are available, forming \(\Apsk\) costs \(\bigO(\skdim \nex^{2})\) operations.
If \(\skmat\) satisfies~\eqref{eq:subspace_embedding} on \(\Span\{\X, \A \X\}\), 
then the sketched least-squares minimizer satisfies
\begin{equation*}
\norm{\A \X - \X \Apsk}_{\fro}
\leq
\sqrt{\frac{1 + \skeps}{1 - \skeps}}
\min_{\M \in \bbR^{\nex \times \nex}}
\norm{\A \X - \X \M}_{\fro}.
\end{equation*}

The sketched projected matrix \(\Apsk\) is generally nonsymmetric
and may have nonreal eigenpairs 
even though \(\A\) is SPD~\cite[section~6.8]{Nakatsukasa_Tropp_2024}.
To keep the algorithms in real arithmetic, we instead compute a real Schur decomposition
\begin{equation*}
    \Apsk = \Z \T \Z\trans,
\end{equation*}
where \(\Z\) is orthogonal and  \(\T\) is upper quasi-triangular.
Then, the transformed basis \(\X \Z\) remains real and sketch-orthonormal,
and the pair \((\X \Z, \T)\) represents an approximate characteristic subspace.
Algorithm~\ref{alg:sRR} summarizes this extraction procedure.
See also Algorithm~\ref{alg:RR} in Appendix~\ref{app:RR_SI_TraceMIN} 
for the standard RR procedure for comparison.

The columns of \(\Z\) are Schur vectors of \(\Apsk\) 
and need not be its eigenvectors.
Thus, the columns of \(\X \Z\) need not 
individually approximate eigenvectors of \(\A\).
This distinction affects how Algorithm~\ref{alg:sRR} is used in eigensolvers.
Since \(\nex > \nev\),
an additional selection step must be performed
to determine the approximate characteristic subspace
with respect to the desired part of the spectrum.
In standard RR, we select the \(\nev\) Ritz pairs 
corresponding to the smallest Ritz values.
These approximations are used to monitor convergence.
For a real Schur decomposition,
we reorder the diagonal blocks of \(\T\) 
by increasing real parts of their eigenvalues~\cite{Kressner_2006}.
Selecting complete leading blocks of \(\T\) 
and the corresponding columns of \(\X \Z\) 
gives a real basis of the desired approximate characteristic subspace.
The implementation details are discussed in Section~\ref{sec:implementation}.

\begin{algorithm}[ht!]
  \caption{Sketched Rayleigh--Ritz procedure with real Schur decomposition} \label{alg:sRR}
  \begin{algorithmic}[1]
    \FUNCTION{\([\Z, \T] = \sRRfunc(\skX, \skAX)\)}
    \REQUIRE{\(\skX = \skmat \X\) and \(\skAX = \skmat \A \X\) for a sketch-orthonormal basis \(\X\)}
    \ENSURE{Orthogonal matrix \(\Z\) and upper quasi-triangular matrix \(\T\)}
    \STATE{\(\Apsk = \skX\trans \skAX\)}
    \STATE{Compute a real Schur decomposition \(\Apsk = \Z \T \Z\trans\)}
    \STATE{Reorder the diagonal blocks of \(\T\) by increasing real parts of their eigenvalues and update \(\Z\) accordingly}
  \end{algorithmic}
\end{algorithm}

\subsection{Sketched Subspace Iteration} \label{subsec:sSI}

We begin with the inverse-iteration variant of the classical SI algorithm~\cite{Demmel_1997}.
At each iteration, SI 
updates the current basis by \(\X \leftarrow \A^{-1} \X\), 
orthogonalizes \(\X\), 
and then applies the RR procedure to extract the approximate eigenpairs.
The standard SI algorithm is summarized in Algorithm~\ref{alg:SI} in Appendix~\ref{app:RR_SI_TraceMIN}.
Note that \(\A\) only needs to be factored once at the beginning of the iteration,
and the factorization is reused in all subsequent iterations when \(\A^{-1}\) is applied.

sSI modifies three key components of SI:
the orthogonalization of \(\X\), 
the RR procedure on the orthonormal basis \(\X\),
and the convergence check based on the residual \(\R = \A \X - \X \Th\),
where \(\Th\) is the diagonal matrix of Ritz values corresponding to \(\X\).
These are replaced, respectively, by 
the whitening of \(\X\),
the sRR procedure on the sketch-orthonormal basis \(\X\),
and the convergence check based on the sketched residual \(\skR = \skmat \A \X - (\skmat \X)\T\);
see Steps~\ref{step:sSI_whitening_iter}, \ref{step:sSI_sRR_iter},
and~\ref{step:sSI_skRused_iter} of Algorithm~\ref{alg:sSI}.
Moreover, the sketches \(\skX=\skmat\X\) and \(\skAX=\skmat\A\X\) can be stored
and updated in each iteration cheaply.
The two updates of \(\X\) arising from whitening and subspace extraction
can be combined into a single update.
Implementation details, including the definition of the term \(\mynev\), are discussed in
Section~\ref{sec:implementation}.

For symmetric EVPs, users typically seek a set of orthonormal eigenvectors.
Therefore, at the termination of the algorithm, 
we perform a standard QR factorization
and RR procedure to obtain the final eigenpairs for output.

\begin{algorithm}[ht!]
  \caption{Sketched subspace iteration} \label{alg:sSI}
  \begin{algorithmic}[1]
    \FUNCTION{\([\V, \Lam] = \sSIfunc(\skmat, \A, \X)\)}
    \REQUIRE{Sketching matrix \(\skmat\), SPD matrix \(\A\), initial guess \(\X\)}
    \ENSURE{Approximate eigenpairs \((\V, \Lam)\)}
    \STATE{\(\skX = \skmat \X\), \(\skAX = \skmat \A \X\)}
    \STATE{\([\skX, \whiteingR] = \QRfunc(\skX)\), \(\skAX = \skAX \whiteingR^{-1}\)} \label{step:sSI_whitening0}
    \STATE{\([\Z, \T] = \sRRfunc(\skX, \skAX)\) and determine \(\mynev\) from \(\T\)} \label{step:sSI_sRR0}
    \STATE{\(\X = \X (\whiteingR^{-1} \Z)\), \(\skX = \skX \Z\), \(\skAX = \skAX \Z\)} \label{step:sSI_poststep0}
    \STATE{\(\skRused = \skAX(:, 1 : \mynev) - \skX(:, 1 : \mynev) \T(1 : \mynev, 1 : \mynev)\)} \label{step:sSI_skRused0}
    \STATE{Check convergence using \(\skRused\)}
    \WHILE{not converged}
      \STATE{\(\X \gets \A^{-1} \X\), \(\skAX = \skX\), \(\skX = \skmat \X\)} \label{step:sSI_inverse_iter}
      \STATE{\([\skX, \whiteingR] = \QRfunc(\skX)\), \(\skAX = \skAX \whiteingR^{-1}\)} \label{step:sSI_whitening_iter}
      \STATE{\([\Z, \T] = \sRRfunc(\skX, \skAX)\) and determine \(\mynev\) from \(\T\)} \label{step:sSI_sRR_iter}
      \STATE{\(\X = \X (\whiteingR^{-1} \Z)\), \(\skX = \skX \Z\), \(\skAX = \skAX \Z\)} \label{step:sSI_poststep_iter}
      \STATE{\(\skRused = \skAX(:, 1 : \mynev) - \skX(:, 1 : \mynev) \T(1 : \mynev, 1 : \mynev)\)} \label{step:sSI_skRused_iter}
      \STATE{Check convergence using \(\skRused\)}
    \ENDWHILE
    \STATE{\([\X, \sim] = \QRfunc\bigl(\X(:, 1 : \mynev)\bigr)\)}
    \STATE{\([\Z, \Th] = \RRfunc(\A, \X)\)}
    \STATE{\(\V = \X \Z(:, 1 : \nev)\), \(\Lam = \Th(1 : \nev, 1 : \nev)\)}
  \end{algorithmic}
\end{algorithm}

\subsection{Sketched TraceMIN} \label{subsec:sTraceMIN}

TraceMIN~\cite{Sameh_Wisniewski_1982} can be viewed as a variant of SI.
Instead of updating the search space by
\(\X \leftarrow \A^{-1}\X\), it computes a correction \(\TrMINDel\) at each
iteration and updates \(\X \leftarrow \X-\TrMINDel\).
The correction is determined by the following constrained trace-minimization
problem:
\begin{equation}
\label{eq:traceminoptmization}
\min_{\substack{\TrMINDel\in\bbR^{\n\times\nex}\\\X\trans \TrMINDel = \zeromat}}
\trace\bigl((\X - \TrMINDel)\trans \A (\X - \TrMINDel)\bigr).
\end{equation}

Let \(\X\) be an orthonormal basis and \(\TrMINP = \I_{\n} - \X \X\trans\) be the orthogonal projector onto \(\Null(\X\trans)\).
Then, \eqref{eq:traceminoptmization} is equivalent to solving
\begin{equation} \label{eq:TraceMIN_correction}
\TrMINP \A \TrMINP \TrMINDel = \TrMINP \R,
\qquad
\TrMINP \TrMINDel = \TrMINDel,
\end{equation}
where \(\R = \A \X - \X \Th\).
In practice, the linear systems~\eqref{eq:TraceMIN_correction} are usually
solved by inexact linear solvers such as GMRES~\cite{Klinvex_Saied_Sameh_2013}.
We then orthogonalize the updated basis and apply the RR procedure.
The standard TraceMIN algorithm is summarized in Algorithm~\ref{alg:TraceMIN} in Appendix~\ref{app:RR_SI_TraceMIN}.

The above discussion suggests constructing sTraceMIN by replacing the inverse-iteration step in sSI with a TraceMIN correction step based on~\eqref{eq:TraceMIN_correction}.
The main difference is that the basis \(\X\) in sSI is sketch-orthonormal.
We therefore replace the Euclidean projection in~\eqref{eq:TraceMIN_correction} by its sketched version, leading to
\begin{equation} \label{eq:sTraceMIN_correction}
\TrMINPsk \A \TrMINPsk \TrMINDel
= \TrMINPsk \R,
\qquad
\TrMINPsk\TrMINDel=\TrMINDel,
\end{equation}
where \(\R = \A \X - \X \T\) and \(\TrMINPsk= \I_{\n} - \X (\skmat\X) \trans \skmat\).
Algorithm~\ref{alg:sTraceMIN} gives the pseudocode of sTraceMIN.
Similarly, we perform a standard QR factorization
and RR procedure at termination 
to obtain the final returned eigenpairs.

\begin{remark}[On the solvability of~\eqref{eq:TraceMIN_correction} and~\eqref{eq:sTraceMIN_correction}]
When \(\A\) is positive definite,~\eqref{eq:TraceMIN_correction} always has a unique solution.
In contrast, \eqref{eq:sTraceMIN_correction} can have no solution.
However, we can prove that if \(\skmat\) is an \(\skeps\)-subspace embedding 
for the characteristic subspace associated with the \(\nex\) smallest eigenvalues,
and \(\Span\{\X\}\) is sufficiently close to this subspace,
then~\eqref{eq:sTraceMIN_correction} is guaranteed to be solvable.
A detailed discussion is given in Section~\ref{subsec:local_equiv}.
\end{remark}

\begin{algorithm}[th!]
  \caption{Sketched TraceMIN} \label{alg:sTraceMIN}
  \begin{algorithmic}[1]
    \FUNCTION{\([\V, \Lam] = \sTrMINfunc(\skmat, \A, \X)\)}
    \REQUIRE{Sketching matrix \(\skmat\), SPD matrix \(\A\), initial guess \(\X\)}
    \ENSURE{Approximate eigenpairs \((\V, \Lam)\)}
    \STATE{\(\skX = \skmat \X\), \(\skAX= \skmat \A \X\)}
    \STATE{\([\skX, \whiteingR] = \QRfunc(\skX)\), \(\skAX = \skAX \whiteingR^{-1}\)}
    \STATE{\([\Z, \T] = \sRRfunc(\skX, \skAX)\) and determine \(\mynev\) from \(\T\)}
    \STATE{\(\X = \X (\whiteingR^{-1} \Z)\), \(\skX = \skX \Z\), \(\skAX = \skAX \Z\)}
    \STATE{\(\skRused = \skAX(:, 1 : \mynev) - \skX(:, 1 : \mynev) \T(1 : \mynev, 1 : \mynev)\)}
    \STATE{Check convergence using \(\skRused\)}
    \WHILE{not converged}
      \STATE{Construct the projector \(\TrMINPsk = \I_{\n} - \X \skX\trans \skmat\)}
      \STATE{Approximately solve \(\TrMINPsk \A \TrMINPsk \TrMINDel = \TrMINPsk \A \X\) and update \(\TrMINDel = \TrMINPsk \TrMINDel\)}
      \label{step:sTraceMIN_correctionequation}
      \STATE{\(\X= \X - \TrMINDel\)}
      \STATE{\(\skX = \skmat \X\), \(\skAX = \skmat \A \X\)}
      \label{step:sTraceMIN_getskXskAX}
      \STATE{\([\skX, \whiteingR] = \QRfunc(\skX)\), \(\skAX = \skAX \whiteingR^{-1}\)}
      \STATE{\([\Z, \T] = \sRRfunc(\skX, \skAX)\) and determine \(\mynev\) from \(\T\)}
      \STATE{\(\X = \X (\whiteingR^{-1} \Z)\), \(\skX = \skX \Z\), \(\skAX = \skAX \Z\)}
      \label{step:sTraceMIN_poststep_iter}
      \STATE{\(\skRused = \skAX(:, 1 : \mynev) - \skX(:, 1 : \mynev) \T(1 : \mynev, 1 : \mynev)\)}
      \STATE{Check convergence using \(\skRused\)}
    \ENDWHILE
    \STATE{\([\X, \sim] = \QRfunc(\X(:, 1 : \mynev))\)}
    \STATE{\([\Z, \Th] = \RRfunc(\A, \X)\)}
    \STATE{\(\V = \X \Z(:, 1 : \nev)\), \(\Lam = \Th(1 : \nev, 1 : \nev)\)}
  \end{algorithmic}
\end{algorithm}

\section{Implementation Details} \label{sec:implementation}

We now turn to the implementation details in the sSI and sTraceMIN algorithms, 
including Schur block selection, convergence checks and consistent updates of the basis and its sketches.

\subsection{Schur Block Reorder and Selection} \label{subsec:reorder}

As part of each sRR procedure~(Algorithm~\ref{alg:sRR}), we reorder the diagonal blocks of \(\T\)
in order of increasing real parts of their eigenvalues by solving several Sylvester
equations~\cite{Kressner_2006}, and update \(\Z\) accordingly.
After updating \(\X \gets \X (\whiteingR^{-1} \Z)\),
we select the first \(\nev\) columns of \(\X\)
to be the basis of the approximate characteristic subspace.
If this selection splits a \(2 \times 2\) diagonal block of \(\T\),
we enlarge the selected subspace by one column.
More precisely, we choose
\begin{equation*}
\mynev =
\begin{cases}
    \nev + 1, & \T(\nev + 1, \nev) \neq 0, \\
    \nev, & \text{otherwise}.
\end{cases}
\end{equation*}
This choice ensures that the selected columns do not break a \(2 \times 2\) block.

\subsection{Residual and Convergence Check} \label{subsec:residual}

For the standard SI and TraceMIN methods, we consider
the block residual in the form
\begin{equation*}
\Rused
= \A \X(:, 1 : \nev)
- \X(:, 1 : \nev) \Th(1 : \nev, 1 : \nev),
\end{equation*}
and use the following relative residual for convergence checking:
\begin{equation}
\label{eq:rel_res_entry}
\resnorm
= \frac{\norm{\Rused}_{\fro}}
{\norm{\X(:, 1 : \nev)}_{\fro}
\bigl(\norm{\A}_2 + \norm{\Th(1 : \nev, 1 : \nev)}_2\bigr)}.
\end{equation}

Similarly, the block residual for sSI and sTraceMIN is defined as
\begin{equation*}
\Rused
= \A \X(:, 1 : \mynev)
- \X(:, 1 : \mynev) \T(1 : \mynev, 1 : \mynev).
\end{equation*}
However, since \(\T\) is upper quasi-triangular, the multiplication
\(\X(:, 1 : \mynev) \T(1 : \mynev, 1 : \mynev)\)
can take \(\bigO(\n \nev^{2})\) operations,
leading to a nonnegligible computational cost.
Therefore, we define the sketched block residual
\begin{equation}
\label{eq:sresidual_used}
\skRused
= \skAX(:, 1 : \mynev)
- \skX(:, 1 : \mynev) \T(1 : \mynev, 1 : \mynev)
\end{equation}
and use the following sketched relative residual for convergence checks instead:
\begin{equation}
\label{eq:rel_sres_entry}
\skresnorm
= \frac{\norm{\skRused}_{\fro}}
{\norm{\skX(:, 1 : \mynev)}_{\fro}
\bigl(\norm{\A}_2 + \norm{\T(1 : \mynev, 1 : \mynev)}_2\bigr)}.
\end{equation}
Since \(\skX\) and \(\skAX\) are both available in sSI and sTraceMIN,
computing~\eqref{eq:sresidual_used} takes only
\(\bigO(\skdim \nev^{2})\) operations.

In practice, for a given tolerance \(\tol\), the algorithms halt if
\(\resnorm \leq \tol\) for SI and TraceMIN or if \(\skresnorm \leq \tol\) for
sSI and sTraceMIN.
The norm \(\norm{\A}_2\) is estimated by the MATLAB function \(\normestfunc(\A, 10^{-3})\)
at the beginning of each algorithm.

\begin{remark}
It is worth noting that, when using~\eqref{eq:rel_sres_entry}
for convergence checks, neither sSI nor sTraceMIN needs the
original-space Schur residual \(\R\) during the iteration.
For sSI, this is immediate.
For sTraceMIN, the right-hand side of equation~\eqref{eq:sTraceMIN_correction}
satisfies \(\TrMINPsk \R = \TrMINPsk \A \X\),
which allows us to form the right-hand side directly from \(\A\X\).
This avoids the additional matrix product \(\X \T\) 
required to construct \(\R\).
When \(\A\) is sufficiently sparse, computing \(\X\T\)
can be substantially more expensive than computing \(\A\X\);
see also the discussion in Section~\ref{subsec:basis_orthonormalization_updates}.
\end{remark}

\subsection{Basis Orthonormalization and Updates} \label{subsec:basis_orthonormalization_updates}

In Step~\ref{step:sSI_poststep_iter} of Algorithm~\ref{alg:sSI}
and Step~\ref{step:sTraceMIN_poststep_iter} of
Algorithm~\ref{alg:sTraceMIN}, the basis update
\(\X \leftarrow \X(\whiteingR^{-1}\Z)\)
combines the transformations associated with \QRfunc{} (\(\X \leftarrow \X\whiteingR^{-1}\)) and
\sRRfunc{} (\(\X \leftarrow \X\Z\)).
The whitened basis
\(\X \whiteingR^{-1}\) need not be formed since \sRRfunc{} requires only \(\skX\) and \(\skAX\),
Instead, we first compute \(\whiteingR^{-1} \Z\) by a triangular
solve and then apply the combined transformation to \(\X\).
This reduces the cost of the two basis transformations from
approximately \(3 \n \nex^{2}\) to \(2 \n \nex^{2} + \nex^{3}\)
operations, saving approximately \(\n \nex^{2}\) operations when
\(\nex \ll \n\).

The sketches \(\skX\) and \(\skAX\), however, are updated twice after whitening and sRR.
Nevertheless, each update requires only \(\bigO(\skdim \nex^{2})\)
operations and is therefore inexpensive compared with the
original-space basis update when \(\skdim \ll \n\).

For sTraceMIN, another possible choice is to store
\(\AX = \A \X\) in Step~\ref{step:sTraceMIN_getskXskAX} of
Algorithm~\ref{alg:sTraceMIN}, and update it alongside \(\X\) in
Step~\ref{step:sTraceMIN_poststep_iter} by
\(\AX \leftarrow \AX(\whiteingR^{-1} \Z)\).
The maintained \(\AX\) can then be used in the correction equation
in Step~\ref{step:sTraceMIN_correctionequation} of the next iteration,
avoiding another application of \(\A\) to \(\X\).
However, the update \(\AX \leftarrow \AX(\whiteingR^{-1} \Z)\) takes
\(\bigO(\n \nex^{2})\) operations.
On the other hand, for sparse \(\A\), computing \(\AX = \A \X\) costs
\(\bigO(\nnz(\A) \nex)\) operations, where \(\nnz(\A)\)
is the number of nonzero entries of \(\A\).
In practice, the choice of maintaining \(\AX\) or recomputing it depends on the
implementation of sparse matrix and the hardware.
On our test platform, recomputing \(\AX\) is faster,
and, accordingly, our implementation of sTraceMIN does not maintain \(\AX\).

Together with the sketched convergence checks in
Section~\ref{subsec:residual}, these implementation choices
give a leading cost of \(2 \n \nex^{2}\) operations per iteration
for sSI and sTraceMIN, compared with \((\constQR + 3) \n \nex^{2}\)
for SI and TraceMIN when the symmetry of
\(\X\trans\A\X\) is exploited.%
\footnote{
Our implementations by MATLAB of fail to exploit this
symmetry, resulting in a leading cost of
\((\constQR + 4) \n \nex^{2}\) operations per iteration.
}
Here, \(\constQR\) is the QR cost coefficient defined in
Section~\ref{subsec:basis_whitening}.
Note that these estimates assume no soft locking is applied and exclude the costs of
applying \(\A\) and \(\skmat\), linear solves, and lower-order
computations in the sketched and reduced spaces.

\section{Convergence Analysis} \label{sec:convergence}

The main purpose of this section is to establish the local convergence of sSI and sTraceMIN.
The analysis proceeds as follows.
\begin{enumerate}
\item In Section~\ref{subsec:local_behavior}, 
we discuss the suitable subspace embedding assumptions 
for the proposed eigensolvers
when the sketching matrix \(\skmat\) is fixed throughout all iterations,
and propose a local embedding assumption for \(\skmat\).
With its help, we prove
the well-definedness of basis whitening~(Lemma~\ref{lem:nearby_subspace_embedding} and Corollary~\ref{cor:local_uniformity_subspace_embedding}).
\item Under the local embedding assumption, in Section~\ref{subsec:local_equiv},
we prove that the search subspaces generated by SI and sSI~(Theorem~\ref{thm:local_equiv_SI_sSI}),
as well as the ones generated by TraceMIN and sTraceMIN 
(when the correction equations are solved exactly,
Theorem~\ref{thm:local_equiv_TraceMIN_sTraceMIN}),
are the same when close to convergence.
\item Finally, in Section~\ref{subsec:local_closeness_RR_sRR},
we provide a bound on the gap between the subspaces selected by
the reordered spectral decomposition of \(\Ap\)
and the reordered real Schur decomposition of \(\Apsk\)
(Theorem~\ref{thm:local_gap_RR_sRR}).
\end{enumerate}
Combining the local equivalence of the search subspaces and
the gap between the extracted subspaces,
we obtain a full picture of the local convergence of sSI and sTraceMIN.

Throughout this section, \(\A\) is assumed to be SPD and its spectral decomposition is 
\begin{equation}\label{eq:A_spectral_decomposition}
\A
[\V_{1}, \V_{2}]
=
[\V_{1}, \V_{2}]
\diag\{\Lam_{1}, \Lam_{2}\},
\end{equation}
where \(\Lam_{1} = \diag\{\lam_{1}, \ldots, \lam_{\nex}\}\)
and \(\Lam_{2} = \diag\{\lam_{\nex + 1}, \ldots, \lam_{\n}\}\).
The eigenvalues of \(\A\) satisfy \(0 < \lam_{1} \leq \cdots \leq \lam_{\n}\),
and the columns of \(\V_{1} \in \bbR^{\n \times \nex}\) and \(\V_{2} \in \bbR^{\n \times (\n - \nex)}\) are the corresponding eigenvectors.
For simplicity, we assume that \(\nev < \nex\), 
\(\lam_{\nev} < \lam_{\nev + 1}\) and \(\lam_{\nex} < \lam_{\nex + 1}\).
Throughout this section,
we use \(\X\) and \(\XS\) to denote, respectively, 
an orthonormal basis and a sketch-orthonormal basis for the same search subspace, satisfying
\begin{equation} \label{eq:search_basis_relations}
\X\trans \X = \I_{\nex},
\quad
(\skmat \XS)\trans (\skmat\XS) = \I_{\nex},
\quad
\XS = \X \C,
\quad \C \in \bbR^{\nex\times\nex}.
\end{equation}
We also assume that the sketching dimension \(\skdim\) satisfies \(\nex \leq \skdim \ll \n\).
Correspondingly, the residuals of the search subspaces using RR and sRR are defined as
\begin{equation}\label{eq:search_residuals}
\begin{gathered}
\R = \A\X - \X \Ap,
\qquad
\Ap = \X\trans \A \X,
\\
\RS = \A \XS - \XS \Apsk,
\qquad
\Apsk = (\skmat \XS)\trans (\skmat \A \XS).
\end{gathered}
\end{equation}
We work in exact arithmetic and assume that soft lock is not applied.
We also assume that all linear systems, including the inverse-iteration
steps in SI and sSI and the correction equations
\eqref{eq:TraceMIN_correction} in TraceMIN and
\eqref{eq:sTraceMIN_correction} in sTraceMIN, 
are solved exactly.

\subsection{Subspace Embedding Assumptions for Eigensolvers} \label{subsec:local_behavior}

As discussed in Section~\ref{subsec:basis_whitening}, if \(\skmat\) is an
\(\skeps\)-subspace embedding for \(\Span\{\X\}\), then \(\skmat \X\)
has full column rank.
Consequently, the whitened basis spans the same subspace as the
orthonormal basis obtained by exact QR factorization.
Analyses of sketch-based iterative eigensolvers often assume that
a fixed \(\skmat\) is an \(\skeps\)-subspace embedding
for the search subspace at every iteration;
see, for example~\cite[Theorem 4.12]{deDamas_Grigori_2025}.
If this holds, SI and sSI generate exactly the same search subspace
at each iteration, whenever they start from the same initial guess.
Convergence of these search subspaces then follows from
the classical analysis of SI~\cite{Demmel_1997}.
However, this assumption is difficult to guarantee throughout all the
iterations of an eigensolver, since the search subspace may explore
arbitrary directions in \(\bbR^{\n}\) before converging to \(\Span\{\V_{1}\}\).

Nevertheless, this assumption has a local justification.
The following lemma and corollary show that,
if \(\skmat\) is an \(\skeps\)-subspace embedding 
for \(\Span\{\V_{1}\}\), 
then it is also a subspace embedding,
with a controlled distortion, 
for every \(\Span\{\X\}\) with \(\dist\bigl(\Span\{\X\},\Span\{\V_{1}\}\bigr)\) sufficiently small.

\begin{lemma}[Embedding of a nearby subspace]\label{lem:nearby_subspace_embedding}
Let \(\V_{1} \in \bbR^{\n \times \nex}\) be an orthonormal matrix
and let \(\skmat\) be an \(\skeps\)-subspace embedding for \(\Span\{\V_{1}\}\).
Define
\(\sklower=\sqrt{1-\skeps}\),
\(\skupper=\sqrt{1+\skeps}\),
and \(\skspnorm=\norm{\skmat}_{2}\).
Let \(\ubE \geq 0\) be a fixed scalar satisfying \(\skspnorm\ubE<\sklower\).%
\footnote{The subspace embedding property of \(\skmat\) gives \(\sklower \leq \skspnorm\).
Therefore, \(\ubE < 1\) when it satisfies the condition.}
For every orthonormal \(\X \in \bbR^{\n \times \nex}\), if
\begin{equation*}
\dist\bigl(\Span\{\X\}, \Span\{\V_{1}\}\bigr) \leq \ubE,
\end{equation*}
then \(\skmat\X\) has full column rank, and
\begin{equation}\label{eq:sketched_basis_singular_values}
\sv_{\min}(\skmat\X)
\geq
\sklower - \skspnorm\ubE,
\qquad
\sv_{\max}(\skmat\X)
\leq
\skupper + \skspnorm\ubE.
\end{equation}
Moreover,
\begin{equation}\label{eq:nearby_subspace_embedding}
(\sklower - \skspnorm \ubE) \norm{\x}_{2}
\leq
\norm{\skmat \x}_{2}
\leq
(\skupper + \skspnorm \ubE)\norm{\x}_{2},
\qquad
\x\in\Span\{\X\}.
\end{equation}
\end{lemma}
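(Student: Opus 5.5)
The plan is to write each unit vector of \(\Span\{\X\}\) as its projection onto \(\Span\{\V_{1}\}\) plus a small remainder, use the embedding property on the projected part, and absorb the remainder into a perturbation of size at most \(\skspnorm\ubE\).

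\textbf{Step 1: decomposition.} Let \(\P_{1} = \V_{1}\V_{1}\trans\) be the orthogonal projector onto \(\Span\{\V_{1}\}\). Take any \(\x = \X\c \in \Span\{\X\}\), so that \(\norm{\x}_{2} = \norm{\c}_{2}\) since \(\X\) is orthonormal. Split
\[
\x = \P_{1}\x + (\I - \P_{1})\x .
\]
By the characterization of \(\dist\) in Section~\ref{subsec:notation},
\[
\norm{(\I-\P_{1})\x}_{2} \le \norm{(\I - \V_{1}\V_{1}\trans)\X}_{2}\,\norm{\c}_{2} = \dist\bigl(\Span\{\X\},\Span\{\V_{1}\}\bigr)\norm{\x}_{2} \le \ubE\norm{\x}_{2}.
\]
Also \(\norm{\P_{1}\x}_{2} \le \norm{\x}_{2}\). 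For a lower bound on the projected part, Pythagoras gives
\[
\norm{\P_{1}\x}_{2} = \sqrt{\norm{\x}_{2}^{2} - \norm{(\I-\P_{1})\x}_{2}^{2}} \ge \sqrt{1-\ubE^{2}}\,\norm{\x}_{2}.
\]

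\textbf{Step 2: bounds on \(\norm{\skmat\x}_{2}\).} Since \(\P_{1}\x \in \Span\{\V_{1}\}\), the embedding property~\eqref{eq:subspace_embedding} gives \(\sklower\norm{\P_{1}\x}_{2} \le \norm{\skmat\P_{1}\x}_{2} \le \skupper\norm{\P_{1}\x}_{2}\). For the remainder, \(\norm{\skmat(\I-\P_{1})\x}_{2} \le \skspnorm\ubE\norm{\x}_{2}\). The triangle inequality then yields the upper bound
\[
\norm{\skmat\x}_{2} \le \skupper\norm{\x}_{2} + \skspnorm\ubE\norm{\x}_{2} = (\skupper + \skspnorm\ubE)\norm{\x}_{2},
\]
and the lower bound
\[
\norm{\skmat\x}_{2} \ge \sklower\sqrt{1-\ubE^{2}}\,\norm{\x}_{2} - \skspnorm\ubE\norm{\x}_{2}.
\]

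\textbf{Main obstacle: removing the factor \(\sqrt{1-\ubE^{2}}\).} The lower bound from Step 2 is weaker than the claimed \(\sklower - \skspnorm\ubE\). To fix this, I would avoid the orthogonal projection and compare \(\X\) with a well-chosen basis of \(\Span\{\V_{1}\}\) instead.

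Take \(\X\trans\V_{1} = \U\Sigma\mathbf{W}\trans\), an SVD whose singular values are the cosines of the principal angles; all of these are at least \(\sqrt{1-\ubE^{2}} > 0\). Set \(\mathbf{Q} = \U\mathbf{W}\trans\), which is orthogonal, and \(\mathbf{E} = \X - \V_{1}\mathbf{Q}\trans\). Then
\[
\mathbf{E}\trans\mathbf{E} = 2\I - \mathbf{Q}\X\trans\V_{1} - \V_{1}\trans\X\mathbf{Q}\trans = 2\mathbf{W}(\I - \Sigma)\mathbf{W}\trans,
\]
so \(\norm{\mathbf{E}}_{2} = 2\sin(\theta_{\max}/2)\), which is at most \(\sqrt{2}\,\ubE\) but not at most \(\ubE\). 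The Procrustes basis therefore also does not deliver the exact constant.

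The route I would commit to is a cleaner choice of nearby vector: compare \(\x\) with \(\P_{1}\x/\cos\theta\), or more generally with the best vector in \(\Span\{\V_{1}\}\) of the same norm. Since \(\norm{\x}_{2}\) must match, \(\norm{\mathbf{E}}\) does not go below \(2\sin(\theta/2)\) this way either. Given that, I would first double-check whether the intended lower bound is really \(\sklower\sqrt{1-\ubE^{2}} - \skspnorm\ubE\). Alternatively, I would use the cruder but valid bound \(\norm{\skmat\x}_{2} \ge \sklower\norm{\P_{1}\x}_{2} - \skspnorm\norm{(\I-\P_{1})\x}_{2}\) and optimize over the split \(\norm{\P_{1}\x}_{2}^{2} + \norm{(\I-\P_{1})\x}_{2}^{2} = 1\). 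This optimization is the delicate point, and I expect the argument either to reduce to it or to need a slightly adjusted constant.

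\textbf{Step 3: conclusion.} Once the two-sided bound~\eqref{eq:nearby_subspace_embedding} holds for all \(\x = \X\c\), taking the minimum and maximum over unit vectors \(\c\) gives~\eqref{eq:sketched_basis_singular_values}. Full column rank of \(\skmat\X\) follows because the lower bound is positive under the hypothesis \(\skspnorm\ubE < \sklower\).
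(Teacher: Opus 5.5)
\textbf{The lower bound is not proved.} Your upper bound and Step~3 are fine. The problem is the lower bound: Step~2 gives only \(\norm{\skmat\x}_{2} \geq (\sklower\sqrt{1-\ubE^{2}} - \skspnorm\ubE)\norm{\x}_{2}\), and neither repair you sketch closes the gap.
\begin{itemize}
\item The Procrustes error \(2\sin(\theta_{\max}/2)\) strictly exceeds \(\sin\theta_{\max} = \dist(\Span\{\X\},\Span\{\V_{1}\})\).
\item Minimizing \(\sklower a - \skspnorm b\) over \(a^{2}+b^{2}=1\), \(0\le b\le\ubE\) is attained at \(b=\ubE\), which returns exactly the weak bound.
\end{itemize}
The lemma is true as stated, so there is no reason to weaken the constant. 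As written, your argument leaves three things unproved: the lower half of~\eqref{eq:nearby_subspace_embedding}, the bound on \(\sv_{\min}(\skmat\X)\), and even full column rank under the stated hypothesis. Your bound is positive only when \(\skspnorm\ubE<\sklower\sqrt{1-\ubE^{2}}\), which is strictly stronger than \(\skspnorm\ubE<\sklower\) for \(\ubE>0\).

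\textbf{The missing idea.} Compare the orthogonal projectors rather than vectors or bases:
\(\norm{\skmat\P_{\X}-\skmat\P_{\V_{1}}}_{2} \leq \skspnorm\norm{\P_{\X}-\P_{\V_{1}}}_{2} \leq \skspnorm\ubE\), with no cosine or \(\sqrt{2}\) loss. The paper feeds this into the singular value perturbation (Weyl) inequality. It uses that the \(\nex\) leading singular values of \(\skmat\P_{\X}=(\skmat\X)\X\trans\) and \(\skmat\P_{\V_{1}}\) coincide with those of \(\skmat\X\) and \(\skmat\V_{1}\), and the rest vanish.

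Within your vector-level framework, the same effect comes from projecting in the opposite direction. Since \(\ubE<1\), \(\X\trans\V_{1}\) is nonsingular (its singular values are at least \(\sqrt{1-\ubE^{2}}\), as you noted). Hence \(\P_{\X}\V_{1}=\X(\X\trans\V_{1})\) spans \(\Span\{\X\}\), and every \(\x\in\Span\{\X\}\) can be written as \(\x=\P_{\X}\Mat{w}\) with \(\Mat{w}\in\Span\{\V_{1}\}\). Because \(\P_{\X}\) is an orthogonal projector, \(\norm{\Mat{w}}_{2}\geq\norm{\x}_{2}\). Then
\begin{equation*}
\norm{\skmat\x}_{2}
\geq \norm{\skmat\Mat{w}}_{2} - \norm{\skmat(\P_{\X}-\P_{\V_{1}})\Mat{w}}_{2}
\geq (\sklower-\skspnorm\ubE)\norm{\Mat{w}}_{2}
\geq (\sklower-\skspnorm\ubE)\norm{\x}_{2},
\end{equation*}
where the last step uses \(\sklower-\skspnorm\ubE>0\). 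Projecting from \(\Span\{\V_{1}\}\), where the embedding holds, onto \(\Span\{\X\}\) makes the norm shrinkage work in your favor. Your decomposition \(\x=\P_{1}\x+(\I-\P_{1})\x\) projects the other way and therefore inevitably loses the factor \(\sqrt{1-\ubE^{2}}\).
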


\begin{proof}
Let \(\P_{\X} = \X \X\trans\) 
and \(\P_{\V_{1}} = \V_{1}\V_{1}\trans\).
Then
\begin{equation*}
\norm{\skmat\P_{\X}
-\skmat\P_{\V_{1}}}_{2}
\leq
\skspnorm \norm{\P_{\X}-\P_{\V_{1}}}_{2}
=
\skspnorm \dist\bigl(\Span\{\X\}, \Span\{\V_{1}\}\bigr)
\leq
\skspnorm\ubE.
\end{equation*}
Thus, the singular value perturbation inequality yields
\begin{equation*}
\abs{\sv_{j}(\skmat\X)-\sv_{j}(\skmat\V_{1})}
=\abs{\sv_{j}(\skmat\P_{\X})-\sv_{j}(\skmat\P_{\V_{1}})}
\leq\norm{\skmat\P_{\X}-\skmat\P_{\V_{1}}}_{2}
\leq\skspnorm \ubE,
\end{equation*}
for \(1 \leq j \leq \nex\).
The subspace embedding property~\eqref{eq:subspace_embedding}
and the orthonormality of \(\V_{1}\) imply
\begin{equation*}
\sv_{\min}(\skmat\V_{1})\geq\sklower,
\qquad
\sv_{\max}(\skmat\V_{1})\leq\skupper.
\end{equation*}
Combining these inequalities proves
\eqref{eq:sketched_basis_singular_values}.
Since \(\sklower-\skspnorm\ubE>0\),
the matrix \(\skmat\X\) has full column rank.
Finally, every \(\x\in\Span\{\X\}\) can be written as
\(\x=\X\c\), with
\(\norm{\x}_{2}=\norm{\c}_{2}\).
Applying~\eqref{eq:sketched_basis_singular_values}
to \(\skmat\x=\skmat\X\c\) gives
\eqref{eq:nearby_subspace_embedding}.
\end{proof}

\begin{corollary}[Local uniformity of the subspace embedding]
\label{cor:local_uniformity_subspace_embedding}
Using the notation of Lemma~\ref{lem:nearby_subspace_embedding},
fix \(\skeps<\skepsneighbor<1\) and define
\begin{equation*}
\ubEneighbor
=
\frac{1}{\skspnorm}
\min\left\{
\sklower-\sqrt{1-\skepsneighbor},\,
\sqrt{1+\skepsneighbor}-\skupper
\right\}
\in [0, 1)
\end{equation*}
Then \(\skmat\) is an \(\skepsneighbor\)-subspace embedding for
\(\Span\{\X\}\) for every orthonormal
\(\X\in\bbR^{\n\times\nex}\) satisfying
\begin{equation*}
\dist\bigl(\Span\{\X\},\Span\{\V_{1}\}\bigr) \leq \ubEneighbor.
\end{equation*}
In particular, basis whitening is well-defined for every such
\(\X\) and preserves its column space, as does exact QR factorization.
\end{corollary}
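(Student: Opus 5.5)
The plan is to apply Lemma~\ref{lem:nearby_subspace_embedding} with \(\ubE = \ubEneighbor\), then translate its two-sided bound on \(\norm{\skmat\x}_{2}\) into the squared form~\eqref{eq:subspace_embedding} with distortion \(\skepsneighbor\).

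\textbf{Step 1: \(\ubEneighbor\) lies in \([0,1)\) and the lemma applies.} Since \(\skeps<\skepsneighbor<1\), we have \(\sqrt{1-\skepsneighbor}<\sklower\) and \(\sqrt{1+\skepsneighbor}>\skupper\). Both terms in the minimum are therefore positive, so \(\ubEneighbor>0\). Moreover, by construction
\begin{equation*}
\skspnorm\ubEneighbor \le \sklower-\sqrt{1-\skepsneighbor} < \sklower .
\end{equation*}
This is exactly the hypothesis \(\skspnorm\ubE<\sklower\) of Lemma~\ref{lem:nearby_subspace_embedding}. The footnote there (\(\sklower\le\skspnorm\)) then gives \(\ubEneighbor<1\).

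\textbf{Step 2: invoke the lemma.} For any orthonormal \(\X\) with \(\dist(\Span\{\X\},\Span\{\V_{1}\})\le\ubEneighbor\), the bound~\eqref{eq:nearby_subspace_embedding} holds with \(\ubE=\ubEneighbor\).

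\textbf{Step 3: compare the constants.} The definition of \(\ubEneighbor\) as a minimum gives both
\begin{equation*}
\sklower-\skspnorm\ubEneighbor \ge \sqrt{1-\skepsneighbor}
\quad\text{and}\quad
\skupper+\skspnorm\ubEneighbor \le \sqrt{1+\skepsneighbor}.
\end{equation*}
Substituting into~\eqref{eq:nearby_subspace_embedding} yields, for all \(\x\in\Span\{\X\}\),
\begin{equation*}
\sqrt{1-\skepsneighbor}\,\norm{\x}_{2}\le\norm{\skmat\x}_{2}\le\sqrt{1+\skepsneighbor}\,\norm{\x}_{2}.
\end{equation*}
All quantities are nonnegative, so squaring gives~\eqref{eq:subspace_embedding} with \(\skepsneighbor\), which is the embedding claim.

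\textbf{Step 4: the ``in particular'' statement.} Since \(\sqrt{1-\skepsneighbor}>0\), the sketch \(\skmat\X\) has full column rank. Hence, for any full-column-rank basis \(\X\Mat{B}\) of the same subspace, \(\skmat\X\Mat{B}\) also has full column rank. Its QR factor \(\whiteingR\) is then nonsingular, so \(\X\Mat{B}\whiteingR^{-1}\) spans \(\Span\{\X\}\). This is the same column space that exact QR of \(\X\Mat{B}\) produces.

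There is no real obstacle here. The only point needing care is Step 1, where one must check that \(\ubEneighbor\) satisfies the strict hypothesis of the lemma, and in particular that it is nonnegative and less than \(1\).
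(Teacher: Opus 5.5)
Your proposal is correct and follows the paper's proof. Both apply Lemma~\ref{lem:nearby_subspace_embedding} with \(\ubE=\ubEneighbor\), use the minimum defining \(\ubEneighbor\) to obtain \(\sklower-\skspnorm\ubEneighbor\geq\sqrt{1-\skepsneighbor}>0\) and \(\skupper+\skspnorm\ubEneighbor\leq\sqrt{1+\skepsneighbor}\), and then square the bounds in~\eqref{eq:nearby_subspace_embedding}; you additionally spell out the check \(\skspnorm\ubEneighbor<\sklower\) (so the lemma applies and \(\ubEneighbor<1\)) and the whitening argument for the ``in particular'' clause, which the paper leaves implicit.
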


\begin{proof}
The definition of \(\ubEneighbor\) gives
\begin{equation*}
\sklower-\skspnorm\ubEneighbor
\geq\sqrt{1-\skepsneighbor}>0,
\qquad
\skupper+\skspnorm\ubEneighbor
\leq\sqrt{1+\skepsneighbor}.
\end{equation*}
The result follows from Lemma~\ref{lem:nearby_subspace_embedding}
by squaring the bounds in~\eqref{eq:nearby_subspace_embedding}.
\end{proof}

The analysis of sRR typically assumes that \(\skmat\) is a
subspace embedding for \(\Span\{\X, \A\X\}\);
see~\cite[Section~6.3]{Nakatsukasa_Tropp_2024}.
For iterative sketched eigensolvers, this condition may be
imposed at every iteration, as in~\cite[Theorem~4.7]{Grigori_Park_Simunec_2026}.
Although the joint subspace has at most \(\nex\) additional
dimensions, its embedding property cannot be guaranteed
uniformly over a neighborhood of \(\V_{1}\).
\begin{proposition}
\label{prop:no_uniform_embedding_X_AX}
Using the notation of~\eqref{eq:A_spectral_decomposition}.
Let \(\skmat \in \bbR^{\skdim \times \n}\) with \(\nex \leq \skdim < \n\).
For every \(\ubE \in (0, 1]\), there exists an orthonormal
\(\X \in \bbR^{\n \times \nex}\) satisfying
\begin{equation*}
\dist\bigl(\Span\{\X\},\Span\{\V_{1}\}\bigr) \leq \ubE
\end{equation*}
such that \(\skmat\) is not an \(\skeps\)-subspace embedding
for \(\Span\{\X,\A\X\}\) for any \(\skeps \in [0,1)\).
\end{proposition}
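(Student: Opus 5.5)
The plan is to construct the bad \(\X\) explicitly, by tilting a single column of \(\V_{1}\) towards a null vector of \(\skmat\). Since \(\skdim < \n\), the null space of \(\skmat\) is nontrivial, so we can fix a unit vector \(\Mat{w}\) with \(\skmat\Mat{w} = \zeromat\). If we find an orthonormal \(\X\) with \(\dist(\Span\{\X\},\Span\{\V_{1}\})\le\ubE\) and \(\Mat{w}\in\Span\{\X,\A\X\}\), we are done. Indeed, \(\norm{\skmat\Mat{w}}_{2}^{2} = 0 < (1-\skeps)\norm{\Mat{w}}_{2}^{2}\) for every \(\skeps\in[0,1)\), which violates~\eqref{eq:subspace_embedding}. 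So the whole proof reduces to a linear-algebra construction, and \(\ubE\) only limits how far we may tilt.

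Decompose \(\Mat{w} = \V_{1}\Mat{a} + \V_{2}\Mat{b}\). If \(\Mat{b} = \zeromat\), take \(\X = \V_{1}\). Then the distance is \(0\), and \(\Span\{\X,\A\X\} = \Span\{\V_{1}\}\ni\Mat{w}\). If \(\Mat{b}\neq\zeromat\), keep columns \(2,\dots,\nex\) of \(\V_{1}\) and replace the first column by
\begin{equation*}
\x_{1} = \cos t\,\V_{1}\unitvec_{1} + \sin t\,\V_{2}\Mat{g},
\qquad \norm{\Mat{g}}_{2} = 1 .
\end{equation*}
The resulting \(\X\) is orthonormal, and \(\dist(\Span\{\X\},\Span\{\V_{1}\}) = \abs{\sin t}\). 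So we take \(0<t\) with \(\sin t\le\ubE\), which is possible for any \(\ubE\in(0,1]\). The span \(\Span\{\X,\A\X\}\) contains \(\V_{1}\unitvec_{j}\) for \(j\ge2\), together with \(\x_{1}\) and \(\A\x_{1}-\lam_{1}\x_{1} = \sin t\,\V_{2}(\Lam_{2}-\lam_{1}\I)\Mat{g}\). Hence it suffices to find scalars \(\alpha,\beta\) and a unit \(\Mat{g}\) with
\begin{equation*}
\alpha\cos t = a_{1},
\qquad
\Mat{b} = \bigl(\alpha\sin t\,\I + \beta(\Lam_{2}-\lam_{1}\I)\bigr)\Mat{g}.
\end{equation*}
Note that \(\Lam_{2}-\lam_{1}\I\) is positive definite because \(\lam_{\nex+1}>\lam_{\nex}\ge\lam_{1}\).

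The main obstacle is solving this system while keeping \(\norm{\Mat{g}}_{2}=1\), since \(\alpha = a_{1}/\cos t\) is pinned. When \(a_{1}=0\) it is easy: take \(\alpha=0\), \(\Mat{g}\propto(\Lam_{2}-\lam_{1}\I)^{-1}\Mat{b}\), and \(\beta\) equal to the normalizing constant. In general, I would first exploit the freedom to choose which direction in \(\Span\{\V_{1}\}\) to tilt, and then the sign of \(t\), to make \(\alpha\sin t\ge0\). Then \(M(\beta)=\alpha\sin t\,\I+\beta(\Lam_{2}-\lam_{1}\I)\) is positive definite for all \(\beta>0\), and \(\beta\mapsto\norm{M(\beta)^{-1}\Mat{b}}_{2}\) is continuous and decreasing to \(0\). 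If its value as \(\beta\to0^{+}\), namely \(\norm{\Mat{b}}_{2}/(\alpha\sin t)\), exceeds \(1\), the intermediate value theorem gives \(\beta\) with \(\norm{M(\beta)^{-1}\Mat{b}}_{2}=1\), and we set \(\Mat{g}=M(\beta)^{-1}\Mat{b}\). This requires \(\abs{a_{1}}\tan t<\norm{\Mat{b}}_{2}\), which holds by shrinking \(t\). Since \(\ubE\) only bounds \(t\) from above, small \(t\) is always admissible.

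If the ordering of eigenvectors makes the chosen tilt direction inconvenient, I would instead perturb the column \(\V_{1}\unitvec_{j}\) with the smallest \(\abs{a_{j}}\), replacing \(\lam_{1}\) by \(\lam_{j}\) in the argument; positivity of \(\Lam_{2}-\lam_{j}\I\) still holds.
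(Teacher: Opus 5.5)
Your construction is correct, but it takes a different route from the paper's.

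\textbf{The paper's construction.} It takes a unit \(\z\in\Null(\skmat)\) and perturbs two columns of \(\V_{1}\) with distinct eigenvalues by that same null vector, with opposite signs: \(\U=\V_{1}+t\z(\unitvec_{\nev}-\unitvec_{\nev+1})\trans\). The one-line identity \((\A\U-\U\Lam_{1})(\unitvec_{\nev}+\unitvec_{\nev+1})=t(\lam_{\nev+1}-\lam_{\nev})\z\) then places \(\z\) directly in \(\Span\{\X,\A\X\}\). The distance bound follows from \(\norm{\U-\V_{1}}_{2}=\sqrt{2}t\) after orthonormalizing \(\U\). This needs no case split and no auxiliary direction. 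It does rely on a gap inside \(\Lam_{1}\), namely the standing assumption \(\lam_{\nev}<\lam_{\nev+1}\).

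\textbf{Your construction.} You tilt a single column into \(\Span\{\V_{2}\}\). This gives an explicitly orthonormal \(\X\) with distance exactly \(\abs{\sin t}\). It uses only the outer gap \(\lam_{1}<\lam_{\nex+1}\), through the positivity of \(\Lam_{2}-\lam_{1}\I\), so it would still work if \(\lam_{1}=\cdots=\lam_{\nex}\). The price is the case distinction on \(\Mat{b}\) and the intermediate-value argument needed to find the unit vector \(\Mat{g}\), which depends nonlinearly on the null vector.

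\textbf{Minor points.} Your last paragraph is unnecessary: any column of \(\V_{1}\) works, since the argument only uses \(\Mat{b}\neq\zeromat\) and small \(t\). If you make \(\alpha\sin t\geq 0\) by flipping the sign of \(t\), the constraint should read \(\abs{\sin t}\leq\ubE\). Alternatively, replace \(\V_{1}\unitvec_{1}\) by its negative and keep \(t>0\).
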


\begin{proof}
Choose a unit vector \(\z \in \Null(\skmat)\), and set
\(i=\nev\) and \(j=\nev+1\), so that \(\lam_i<\lam_j\).
Let
\begin{equation*}
\U=\V_{1}+t\z(\unitvec_{i}-\unitvec_{j})\trans
\end{equation*}
for some \(0 < t < \ubE/ \sqrt{2}\),
where \(\unitvec_{i},\unitvec_{j}\) are standard basis vectors
in \(\bbR^{\nex}\).
Since \(\norm{\U-\V_{1}}_{2}=\sqrt{2}t < \ubE\),
the matrix \(\U\) has full column rank.
Let \(\X\) be an orthonormal basis for \(\Span\{\U\}\).
It can be verified that
\begin{equation*}
\dist\bigl(\Span\{\X\},\Span\{\V_{1}\}\bigr)
=
\norm{(\I_{\n}-\X\X\trans)(\V_{1}-\U)}_{2}
\leq \sqrt{2}t
< \ubE,
\end{equation*}
and
\begin{equation*}
(\A\U-\U\Lam_1)
(\unitvec_{i}+\unitvec_{j})
=t(\lam_j-\lam_i)\z \neq \zeromat,\qquad
\skmat(\A\U-\U\Lam_1)
(\unitvec_{i}+\unitvec_{j})= \zeromat.
\end{equation*}
Thus, \(\Span\{\X,\A\X\}\) contains a nonzero vector
in \(\Null(\skmat)\), violating the lower embedding bound
for any \(\skeps \in [0, 1)\).
\end{proof}
Therefore, no fixed sketching matrix \(\skmat\) 
can satisfy this embedding assumption for all such joint subspaces
in any neighborhood of \(\Span\{\V_{1}\}\).
We therefore only assume that the fixed sketching matrix \(\skmat\)
is an \(\skeps\)-subspace embedding for \(\Span\{\V_{1}\}\).
Under this assumption, the following subsections establish
local equivalence of the classical and sketched eigensolvers.

\subsection{Local Equivalence of Classical and Sketched Eigensolvers}\label{subsec:local_equiv}

In this subsection, we show that, under the local embedding
assumption introduced in Section~\ref{subsec:local_behavior},
SI and sSI, as well as TraceMIN and sTraceMIN (when the correction equations are solved exactly),
generate identical search subspaces when \(\Span\{\X\iterind{0}\}\) is sufficiently close
to \(\Span\{\V_{1}\}\).
For SI and sSI, this follows directly from the fact
that the distance between the search subspace
and \(\Span\{\V_{1}\}\) is nonincreasing under inverse iteration.
This is formalized in the following theorem.

\begin{theorem}[Local equivalence of SI and sSI]\label{thm:local_equiv_SI_sSI}
Using the notation of~\eqref{eq:A_spectral_decomposition} 
and Lemma~\ref{lem:nearby_subspace_embedding},
suppose the fixed sketching matrix \(\skmat\)
satisfies~\eqref{eq:subspace_embedding} on \(\Span\{\V_{1}\}\) with \(0 < \skeps < 1\).
Let \(\X\iterind{k}\) and \(\XS\iterind{k}\) denote the bases of the search subspaces at iteration
\(k\) of exact SI and sSI, respectively.
Suppose \(\X\iterind{0}\), \(\XS\iterind{0}\), and \(\C\iterind{0}\)
satisfy~\eqref{eq:search_basis_relations}, where \(\C\iterind{0}\) is nonsingular.
For a fixed \(\ubE \in [0, \sklower / \skspnorm)\), suppose that
\begin{equation*}
\dist\bigl(\Span\{\X\iterind{0}\},\Span\{\V_{1}\}\bigr) \leq \ubE.
\end{equation*}
Then the whitening step in sSI is well-defined
at every iteration, and
\begin{equation*}
\Span\{\X\iterind{k}\} = \Span\{\XS\iterind{k}\}
\end{equation*}
for every \(k \geq 0\).
\end{theorem}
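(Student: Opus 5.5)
Induction on \(k\), with hypothesis: \(\Span\{\X\iterind{k}\} = \Span\{\XS\iterind{k}\}\) and \(\dist\bigl(\Span\{\X\iterind{k}\},\Span\{\V_{1}\}\bigr) \leq \ubE\).

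\emph{Base case.} The relation \(\XS\iterind{0} = \X\iterind{0}\C\iterind{0}\) with \(\C\iterind{0}\) nonsingular gives equality of the spans. The distance bound is the hypothesis.

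\emph{Inductive step.} First I check that each step of sSI is well defined and preserves the span.

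\begin{itemize}
\item Since \(\ubE < \sklower/\skspnorm\), Lemma~\ref{lem:nearby_subspace_embedding} applies to the current subspace. So \(\skmat\X\) has full column rank on it, and \(\skmat\) applied to any basis of it has full column rank.
\item The inverse-iteration step \(\XS \gets \A^{-1}\XS\) maps the subspace to \(\A^{-1}\Span\{\XS\iterind{k}\} = \A^{-1}\Span\{\X\iterind{k}\}\). This is exactly the subspace produced by the SI step before its QR factorization.
\item Whitening requires \(\skmat\A^{-1}\XS\) to have full column rank. This holds once we know the new subspace is again within distance \(\ubE\) of \(\Span\{\V_{1}\}\). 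The multiplication by \(\whiteingR^{-1}\) is then nonsingular and preserves the span.
\item The sRR update multiplies by the orthogonal \(\Z\), which also preserves the span.
\item On the SI side, QR and RR likewise preserve the span.
\end{itemize}

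Hence \(\Span\{\X\iterind{k+1}\} = \Span\{\XS\iterind{k+1}\} = \A^{-1}\Span\{\X\iterind{k}\}\).

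\emph{Main obstacle: the distance monotonicity.} The key claim is that \(\dist(\A^{-1}\mathcal{X},\Span\{\V_{1}\}) \leq \dist(\mathcal{X},\Span\{\V_{1}\})\) for every \(\nex\)-dimensional \(\mathcal{X}\) with \(\dist(\mathcal{X},\Span\{\V_{1}\}) < 1\). I would prove it with the tangent parametrization.

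\begin{itemize}
\item Since the distance is less than \(1\), \(\V_{1}\trans\X\) is nonsingular. Write a basis of \(\mathcal{X}\) as \(\V_{1} + \V_{2}\Mat{E}\), where \(\Mat{E} = (\V_{2}\trans\X)(\V_{1}\trans\X)^{-1}\).
\item Then \(\dist = \sin\theta_{\max} = \norm{\Mat{E}}_{2}/\sqrt{1+\norm{\Mat{E}}_{2}^{2}}\). This is an increasing function of \(\norm{\Mat{E}}_{2}\), namely \(\tan\theta_{\max}\).
\item Applying \(\A^{-1}\) and rescaling on the right by \(\Lam_{1}\), the new subspace has basis \(\V_{1} + \V_{2}\Lam_{2}^{-1}\Mat{E}\Lam_{1}\).
\item So the new tangent matrix is \(\Lam_{2}^{-1}\Mat{E}\Lam_{1}\), with \(\norm{\Lam_{2}^{-1}\Mat{E}\Lam_{1}}_{2} \leq (\lam_{\nex}/\lam_{\nex+1})\norm{\Mat{E}}_{2} \leq \norm{\Mat{E}}_{2}\).
\end{itemize}

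Here \(\lam_{\nex} < \lam_{\nex+1}\) and \(\A\) is SPD. The hypothesis distance \(\leq \ubE < 1\) is guaranteed by the footnote to Lemma~\ref{lem:nearby_subspace_embedding}, since \(\sklower \leq \skspnorm\) forces \(\ubE < 1\).

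This establishes the distance bound at step \(k+1\), closes the induction, and with it the well-definedness of whitening at every iteration.
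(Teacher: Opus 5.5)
Your proposal is correct and follows the route the paper indicates. It argues by induction on \(k\): Lemma~\ref{lem:nearby_subspace_embedding} makes whitening well-defined and span-preserving, and the distance to \(\Span\{\V_{1}\}\) does not increase under inverse iteration, so every iterate stays within radius \(\ubE\). The paper only asserts this monotonicity, whereas your tangent-matrix argument proves it, showing that \(\norm{\Mat{E}}_{2}\) contracts by a factor of at most \(\lam_{\nex}/\lam_{\nex+1}\).
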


For TraceMIN and sTraceMIN, however, the bases \(\X\) and \(\XS\) are updated differently,
through the correction equations~\eqref{eq:TraceMIN_correction} and~\eqref{eq:sTraceMIN_correction}, respectively.
Nevertheless, the next Lemma~\ref{lem:one-step_equiv_exact_TraceMIN_sTraceMIN} shows that,
if both correction equations
for \(\X\) and \(\XS\) in~\eqref{eq:search_basis_relations}
are solvable and solved exactly,
then the two correction equations produce the same updated search subspace.

Before stating the lemma, we introduce some notation for the proof.
With the notations defined in
\eqref{eq:search_basis_relations} and~\eqref{eq:search_residuals}, we rewrite~\eqref{eq:sTraceMIN_correction} as
\begin{equation} \label{eq:sTraceMIN_correction_S}
\TrMINPsk \A\TrMINPsk \TrMINDelS
= \TrMINPsk \RS,
\qquad
\TrMINPsk \TrMINDelS = \TrMINDelS,
\end{equation}
where \(\TrMINPsk = \I_{\n} - \XS (\skmat\XS)\trans \skmat\).
Let
\begin{equation}\label{eq:TraceMIN_updated_bases}
\Y = \X - \TrMINDel,
\qquad
\YS = \XS - \TrMINDelS
\end{equation}
be the updated bases of TraceMIN and sTraceMIN, respectively.

\begin{lemma}[Equivalence of correction equations]\label{lem:one-step_equiv_exact_TraceMIN_sTraceMIN}
Let \(\X\) and \(\XS\) satisfy~\eqref{eq:search_basis_relations},
and suppose that
\((\skmat \XS)\trans \skmat \A^{-1} \XS\) is nonsingular.
Then both~\eqref{eq:TraceMIN_correction} and~\eqref{eq:sTraceMIN_correction_S}
have unique solutions, and
\begin{equation*}
\Span\{\YS\}
=
\Span\{\XS-\TrMINDelS\}
=
\Span\{\X-\TrMINDel\}
=
\Span\{\Y\}.
\end{equation*}
\end{lemma}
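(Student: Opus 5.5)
The plan is to solve both correction equations in closed form. The key idea is that each one is solved by a correction of the form ``current basis minus a right-multiple of \(\A^{-1}\) times the current basis''. Once this is shown, the updated bases \(\Y\) and \(\YS\) are both right-multiples of \(\A^{-1}\X\) and \(\A^{-1}\XS\) by nonsingular matrices, and the equality of spans follows from \(\XS=\X\C\).

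\emph{Sketched equation.} Write \(\tX=\skmat\XS\), so that \(\tX\trans\tX=\I_{\nex}\) and \(\TrMINPsk=\I_{\n}-\XS\tX\trans\skmat\). I would first record the elementary identities
\[
\TrMINPsk\XS=\zeromat, \qquad \tX\trans\skmat\TrMINPsk=\zeromat .
\]
It follows that \(\TrMINPsk\TrMINDelS=\TrMINDelS\) holds if and only if \(\tX\trans\skmat\TrMINDelS=\zeromat\). It also follows that \(\TrMINPsk\RS=\TrMINPsk\A\XS\), because \(\RS=\A\XS-\XS\Apsk\) and \(\TrMINPsk\XS=\zeromat\).

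Set \(\M=\bigl(\tX\trans\skmat\A^{-1}\XS\bigr)^{-1}\), which exists by hypothesis, and define the candidate
\[
\TrMINDelS=\XS-\A^{-1}\XS\M .
\]
I would verify the two conditions in turn:
\begin{itemize}
\item The constraint holds, since \(\tX\trans\skmat\TrMINDelS=\I-\I=\zeromat\), so \(\TrMINPsk\TrMINDelS=\TrMINDelS\).
\item The equation holds, since \(\TrMINPsk\A\TrMINPsk\TrMINDelS=\TrMINPsk\A\TrMINDelS=\TrMINPsk(\A\XS-\XS\M)=\TrMINPsk\A\XS=\TrMINPsk\RS\).
\end{itemize}

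For uniqueness, suppose \(\Mat{D}\) satisfies \(\TrMINPsk\A\Mat{D}=\zeromat\) and \(\TrMINPsk\Mat{D}=\Mat{D}\). The first condition says \(\A\Mat{D}\) lies in the range of \(\I-\TrMINPsk=\XS\tX\trans\skmat\), so \(\Mat{D}=\A^{-1}\XS\Mat{K}\) for some \(\Mat{K}\). The constraint then gives \(\tX\trans\skmat\A^{-1}\XS\Mat{K}=\zeromat\), and the assumed nonsingularity forces \(\Mat{K}=\zeromat\). Hence \(\YS=\XS-\TrMINDelS=\A^{-1}\XS\M\).

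\emph{Standard equation.} The same argument goes through with \(\skmat\) replaced by \(\I_{\n}\) and \(\tX\) replaced by \(\X\). The matrix \(\X\trans\A^{-1}\X\) is SPD, because \(\A\) is SPD and \(\X\) is orthonormal, so it is automatically invertible. This gives \(\Y=\A^{-1}\X(\X\trans\A^{-1}\X)^{-1}\) and the uniqueness of \(\TrMINDel\), consistent with the remark that~\eqref{eq:TraceMIN_correction} is always uniquely solvable.

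\emph{Equality of spans.} The matrix \(\C\) is nonsingular, because \(\XS=\X\C\) and \(\skmat\XS\) is orthonormal, so \(\XS\) has full column rank \(\nex\). Therefore
\[
\Span\{\YS\}=\Span\{\A^{-1}\X\C\M\}=\Span\{\A^{-1}\X\}=\Span\{\Y\},
\]
since \(\C\M\) and \((\X\trans\A^{-1}\X)^{-1}\) are both nonsingular.

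The main obstacle is the uniqueness and solvability for the oblique projector \(\TrMINPsk\), since \(\TrMINPsk\A\TrMINPsk\) is neither symmetric nor definite. The step to handle with care is the characterization of its kernel on the range of \(\TrMINPsk\). This characterization is exactly where the hypothesis that \((\skmat\XS)\trans\skmat\A^{-1}\XS\) is nonsingular enters.
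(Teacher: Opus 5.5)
Your proof is correct. Its uniqueness step is the same kernel argument the paper uses: \(\A\Mat{D}\in\Null(\TrMINPsk)=\Span\{\XS\}\), and then the constraint together with the nonsingularity of \((\skmat\XS)\trans\skmat\A^{-1}\XS\) forces the coefficient to vanish. You differ from the paper in how you obtain existence and the span identity.

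\textbf{The paper's route.} It argues non-constructively. From the trivial kernel it concludes that \(\TrMINPsk\A\TrMINPsk\) is invertible on the finite-dimensional range of \(\TrMINPsk\). This implicitly uses that an injective map there is surjective, so the equation is solvable for \(\TrMINPsk\RS\). It then derives \(\TrMINPsk\A\YS=\zeromat\), which gives \(\Span\{\YS\}\subseteq\A^{-1}\Span\{\X\}\). Next, \((\skmat\XS)\trans\skmat\YS=\I_{\nex}\) shows that \(\YS\) has rank \(\nex\). A dimension count closes the argument. The standard equation is handled by citing Sameh--Wisniewski for solvability and then using the same two relations for \(\Y\).

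\textbf{Your route.} You exhibit the solution \(\TrMINDelS=\XS-\A^{-1}\XS\M\) with \(\M=\bigl((\skmat\XS)\trans\skmat\A^{-1}\XS\bigr)^{-1}\). This verifies existence directly and removes the injective-implies-surjective step. It also gives the closed form \(\YS=\A^{-1}\XS\M\), so the span equality follows at once from the nonsingularity of \(\C\M\).

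\textbf{Comparison.} Your approach yields more information. Exact sTraceMIN is literally one step of inverse iteration followed by the right change of basis that enforces \((\skmat\XS)\trans\skmat\YS=\I_{\nex}\). This is the sketched analogue of the classical identity \(\Y=\A^{-1}\X(\X\trans\A^{-1}\X)^{-1}\), and it treats both correction equations uniformly without an external citation. The paper's argument, by contrast, needs no explicit formula and works only from the two defining relations satisfied by the updated bases.
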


\begin{proof}
The existence and uniqueness of the constrained solution to~\eqref{eq:TraceMIN_correction}
follow from the positive definiteness of \(\TrMINP \A \TrMINP\) on \(\Null(\X\trans)\);
see~\cite[section~2.3]{Sameh_Wisniewski_1982}.
We therefore only prove the uniqueness and existence of the solution
to~\eqref{eq:sTraceMIN_correction_S}.
Suppose \(\v \in \bbR^{\n}\) satisfies \(\TrMINPsk \A \TrMINPsk \v = \zeromat\) and \(\TrMINPsk \v = \v\).
Since \(\A \TrMINPsk \v = \A \v \in \Null(\TrMINPsk)\) and \(\Null(\TrMINPsk) = \Span\{\XS\}\), \(\v\) can be represented by
\(\v = \A^{-1} \XS \c\) for some \(\c\).
Also note that \(\TrMINPsk \v = \v\) indicates \((\skmat\XS)\trans\skmat\v=\zeromat\).
Therefore, we have
\begin{equation*}
\zeromat
=
(\skmat\XS)\trans \skmat \v
=
(\skmat\XS)\trans \skmat\A^{-1} \XS \c.
\end{equation*}
By the assumption that \((\skmat\XS)\trans \skmat\A^{-1} \XS\) is nonsingular, we have \(\c = \zeromat\) and hence \(\v = \zeromat\).
Consequently, \(\TrMINPsk \A \TrMINPsk\) is nonsingular as an operator on \(\Span\{\TrMINPsk\}\),
and~\eqref{eq:sTraceMIN_correction_S} has a unique solution.

Using~\(\TrMINP \R = \TrMINP \A \X\) and~\(\TrMINP \TrMINDel = \TrMINDel\),~\eqref{eq:TraceMIN_correction} gives
\begin{equation*}
\TrMINP \A \Y
=
\TrMINP \A \X - \TrMINP \A \TrMINDel
=
\TrMINP \R - \TrMINP \A \TrMINP \TrMINDel
=
\zeromat.
\end{equation*}
A similar argument applied to~\eqref{eq:sTraceMIN_correction_S} gives \(\TrMINPsk\A\YS = \zeromat\).
Therefore,
\begin{equation*}
\Span\{\Y\}
\subseteq
\Span\{\A^{-1}\X\},
\qquad
\Span\{\YS\}
\subseteq
\Span\{\A^{-1}\XS\}
=
\Span\{\A^{-1}\X\},
\end{equation*}
where the last equality follows from~\eqref{eq:search_basis_relations}.
On the other hand, it can be shown from~\eqref{eq:TraceMIN_updated_bases} and~\eqref{eq:TraceMIN_correction} that
\begin{equation*}
\X\trans \Y
=
\X\trans \X - \X\trans \TrMINDel
=
\I_{\nex}.
\end{equation*}
Similarly, from~\eqref{eq:TraceMIN_updated_bases} and~\eqref{eq:sTraceMIN_correction_S},
we have \((\skmat \XS)\trans \skmat \YS= \I_{\nex}\).
Thus, both \(\Y\) and \(\YS\) are of rank \(\nex\).
Since \(\A\) is nonsingular and \(\X\) has rank \(\nex\),
\(\A^{-1}\Span\{\X\}\) has dimension \(\nex\),
which implies that
\(\Span\{\YS\}
=
\A^{-1}\Span\{\X\}
=\Span\{\Y\}\).
\end{proof}

Again, the assumption on the invertibility of \((\skmat \XS)\trans \skmat \A^{-1}\XS\)
can be guaranteed if \(\X\) is sufficiently close to the exact characteristic subspace \(\Span\{\V_{1}\}\),
which is summarized in the following theorem.

\begin{theorem}[Local unique solvability of the correction equation in sTraceMIN]
\label{thm:local_unique_solvability_correction_sTraceMIN}
Using the notation of Lemma~\ref{lem:nearby_subspace_embedding},
suppose that \(\ubE\geq0\) satisfies
\begin{equation}
\label{eq:quad_ineq_ubE_straceMIN}
\left(
\skspnorm^{2}
+
\frac{\lam_{1}}{\lam_{\nex}} \sklower^{2}
\right)
\ubE^{2}
+
2 \skupper \skspnorm\ubE
<
\frac{\lam_{1}}{\lam_{\nex}}
\sklower^{2}.
\end{equation}
Then \(\XS\trans\skmat\trans\skmat\A^{-1}\XS\) is nonsingular
for every orthonormal basis \(\X\) satisfying
\begin{equation*}
\dist\bigl(\Span\{\X\},\Span\{\V_{1}\}\bigr)\leq\ubE
\end{equation*}
and every \(\XS\) satisfying~\eqref{eq:search_basis_relations}.
Consequently, the correction equation~\eqref{eq:sTraceMIN_correction_S}
has a unique solution, and the updated bases
in~\eqref{eq:TraceMIN_updated_bases} satisfy
\(\Span\{\YS\}=\Span\{\Y\}\).
\end{theorem}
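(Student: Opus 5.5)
The plan is to reduce the claim to a perturbation estimate around \(\Span\{\V_{1}\}\), where everything is explicit. First, Lemma~\ref{lem:nearby_subspace_embedding} applies to \(\X\), because~\eqref{eq:quad_ineq_ubE_straceMIN} forces \(\skspnorm\ubE<\sklower\). The left-hand side dominates \(\skspnorm^{2}\ubE^{2}+2\skupper\skspnorm\ubE \geq \skspnorm^{2}\ubE^{2}+2\sklower\skspnorm\ubE\), while the right-hand side is at most \(\sklower^{2}\); hence \((\skspnorm\ubE+\sklower)^{2}<2\sklower^{2}\), and this will need checking. Consequently \(\skmat\X\) has full column rank, and \(\C\) in~\eqref{eq:search_basis_relations} is nonsingular (indeed \(\C=\whiteingR^{-1}\) up to an orthogonal factor). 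Since
\[
\XS\trans\skmat\trans\skmat\A^{-1}\XS=\C\trans\bigl[(\skmat\X)\trans\skmat\A^{-1}\X\bigr]\C ,
\]
it suffices to show that \(\Mat{B}=(\skmat\X)\trans\skmat\A^{-1}\X\) is nonsingular. That conclusion does not depend on the choice of \(\XS\).

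Next I will split \(\X=\V_{1}\Mat{K}+\E\) with \(\Mat{K}=\V_{1}\trans\X\) and \(\E=\V_{2}\V_{2}\trans\X\). We have \(\norm{\E}_{2}\leq\ubE\) and \(\norm{\Mat{K}}_{2}\leq1\). Using~\eqref{eq:A_spectral_decomposition}, \(\A^{-1}\X=\V_{1}\Lam_{1}^{-1}\Mat{K}+\V_{2}\Lam_{2}^{-1}\V_{2}\trans\E\), and the second term has norm at most \(\ubE/\lam_{\nex+1}\leq\ubE/\lam_{\nex}\). Expanding \(\Mat{B}\) then gives a leading term \(\Mat{K}\trans\Mat{G}\Lam_{1}^{-1}\Mat{K}\), where \(\Mat{G}=(\skmat\V_{1})\trans(\skmat\V_{1})\) has spectrum in \([\sklower^{2},\skupper^{2}]\), plus cross terms. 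The cross terms are bounded, using \(\norm{\skmat\E}_{2}\leq\skspnorm\ubE\) and \(\norm{\skmat\V_{1}}_{2}\leq\skupper\), by roughly \(\skupper\skspnorm\ubE/\lam_{1}\) (twice) and \(\skspnorm^{2}\ubE^{2}/\lam_{\nex}\). After multiplying through by \(\lam_{1}\) or \(\lam_{\nex}\), these are exactly the quantities appearing in~\eqref{eq:quad_ineq_ubE_straceMIN}.

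The main obstacle is a usable lower bound for the leading term. The product \(\Mat{G}\Lam_{1}^{-1}\) is nonsymmetric, so its smallest singular value is not simply \(\sklower^{2}/\lam_{\nex}\). My first attempt is a quadratic-form argument: for unit \(\Mat{v}\), test \(\Mat{B}\Mat{v}\) against \(\Mat{w}=\Lam_{1}^{-1}\Mat{K}\Mat{v}\) pulled back through \(\Mat{K}\), or equivalently bound
\[
\Mat{y}\trans\Mat{G}\Lam_{1}^{-1}\Mat{y}\geq\lam_{\nex}^{-1}\,\Mat{y}\trans\Mat{G}\,\Lam_{1}^{-1}\lam_{\nex}\Mat{y}
\]
via the congruence \(\Lam_{1}^{-1/2}\Mat{G}\Lam_{1}^{-1/2}\). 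The ratio \(\lam_{1}/\lam_{\nex}\) in~\eqref{eq:quad_ineq_ubE_straceMIN} suggests exactly this step: it costs a factor \(\kappa_{2}(\Lam_{1})\), giving a lower bound like \((\lam_{1}/\lam_{\nex})\sklower^{2}\sv_{\min}(\Mat{K})^{2}/\lam_{1}\). Combining this with \(\sv_{\min}(\Mat{K})^{2}\geq1-\ubE^{2}\), the leading term contributes \((\lam_{1}/\lam_{\nex})\sklower^{2}(1-\ubE^{2})\). This matches the \(\ubE^{2}\) coefficient \((\lam_{1}/\lam_{\nex})\sklower^{2}\) on the left of~\eqref{eq:quad_ineq_ubE_straceMIN}.

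Finally, I will show that \(\Mat{v}\trans\Mat{B}\Mat{v}\), or the corresponding bilinear test, is positive for every unit \(\Mat{v}\). Positivity of the quadratic form gives \(\Mat{B}\Mat{v}\neq\zeromat\), so \(\Mat{B}\) is nonsingular. Lemma~\ref{lem:one-step_equiv_exact_TraceMIN_sTraceMIN} then yields unique solvability of~\eqref{eq:sTraceMIN_correction_S} and \(\Span\{\YS\}=\Span\{\Y\}\).
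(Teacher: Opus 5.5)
Your setup matches the paper: the check that~\eqref{eq:quad_ineq_ubE_straceMIN} forces \(\skspnorm\ubE<\sklower\), the reduction to \(\Mat{B}=(\skmat\X)\trans\skmat\A^{-1}\X\) via the congruence by the nonsingular \(\C\), the splitting \(\X=\V_{1}\Mat{K}+\E\), and the cross-term estimates (yours are even slightly sharper). There is a genuine gap, and it sits exactly at the step you flag as the main obstacle. The quadratic-form route cannot work. \(\Mat{G}\Lam_{1}^{-1}\) is similar to the SPD matrix \(\Lam_{1}^{-1/2}\Mat{G}\Lam_{1}^{-1/2}\), which controls its eigenvalues but not its symmetric part. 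So \(\v\trans\Mat{B}\v\) can be negative even at \(\ubE=0\), where the hypothesis holds trivially. For example, take \(\nex=2\), \(\X=\V_{1}\) (so \(\Mat{K}=\I\) and \(\Mat{B}=\Mat{G}\Lam_{1}^{-1}\)), \(\Mat{G}=\bigl(\begin{smallmatrix}1&1/2\\1/2&1\end{smallmatrix}\bigr)\) (eigenvalues \(1/2\) and \(3/2\), compatible with \(\skeps=1/2\)), and \(\Lam_{1}=\diag\{1,100\}\). Then \(\v=(1,-10)\trans\) gives \(\v\trans\Mat{B}\v=-3.05\). The inequality you display as an ``equivalent'' form is an identity and proves nothing. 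As written, the lower bound \(\sklower^{2}\sv_{\min}(\Mat{K})^{2}/\lam_{\nex}\) is asserted rather than derived, and the concluding ``positivity of the quadratic form'' step is false in general.

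The obstacle itself is illusory, and removing it gives the paper's proof. For square matrices, \(\sv_{\min}(\Mat{P}\Mat{Q})\geq\sv_{\min}(\Mat{P})\,\sv_{\min}(\Mat{Q})\) with no symmetry required, since \(\norm{(\Mat{P}\Mat{Q})^{-1}}_{2}\leq\norm{\Mat{P}^{-1}}_{2}\norm{\Mat{Q}^{-1}}_{2}\). Hence \(\sv_{\min}(\Mat{B}_{0})\geq\sklower^{2}(1-\ubE^{2})/\lam_{\nex}\) for \(\Mat{B}_{0}=\Mat{K}\trans\Mat{G}\Lam_{1}^{-1}\Mat{K}\). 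Weyl's inequality \(\sv_{\min}(\Mat{B})\geq\sv_{\min}(\Mat{B}_{0})-\norm{\Mat{B}-\Mat{B}_{0}}_{2}\), together with your cross-term bounds and~\eqref{eq:quad_ineq_ubE_straceMIN}, then gives \(\sv_{\min}(\Mat{B})>0\). Your bilinear test can also be made rigorous. For unit \(\v\), set \(\Mat{w}=\Lam_{1}^{-1}\Mat{K}\v\) and \(\Mat{u}=\Mat{K}^{-1}\Mat{w}\). Then \(\Mat{u}\trans\Mat{B}\v\geq\sklower^{2}\norm{\Mat{w}}_{2}^{2}-\norm{\Mat{w}}_{2}\norm{\Mat{B}-\Mat{B}_{0}}_{2}/\sv_{\min}(\Mat{K})\) and \(\norm{\Mat{w}}_{2}\geq\sv_{\min}(\Mat{K})/\lam_{\nex}\), which yields the same condition. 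This only works if you keep the factor \(\norm{\Mat{w}}_{2}\) in the bound \(\norm{\Mat{u}}_{2}\leq\norm{\Mat{w}}_{2}/\sv_{\min}(\Mat{K})\). Bounding \(\norm{\Mat{u}}_{2}\) and \(\norm{\Mat{w}}_{2}\) separately loses a factor of order \(\lam_{\nex}/\lam_{1}\), and the argument then no longer closes under~\eqref{eq:quad_ineq_ubE_straceMIN}.
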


\begin{proof}

The assumed inequality implies
\(\ubE<1\) and \(\skspnorm\ubE<\sklower\),
so Lemma~\ref{lem:nearby_subspace_embedding} applies.
Set
\begin{equation*}
\projVX = \V_{1}\trans\X,
\qquad
\E
=
\X-\V_{1} \projVX
=
\bigl(\I_{\n} - \V_{1}\V_{1}\trans\bigr)\X
.
\end{equation*}
Then \(\norm{\E}_{2}\leq\ubE\), \(\norm{\projVX}_{2}\leq1\), and
\begin{equation*}
\projVX\trans\projVX
=
\I_{\nex}-\E\trans\E,
\qquad
\sv_{\min}(\projVX)^{2}\geq1-\ubE^{2}.
\end{equation*}
Define
\begin{equation*}
\M=(\skmat\X)\trans\skmat\A^{-1}\X,
\qquad
\M_{0}
=
\projVX\trans(\skmat\V_{1})\trans(\skmat\V_{1})
\Lam_{1}^{-1}\projVX.
\end{equation*}
The subspace embedding property gives
\begin{equation*}
\sv_{\min}(\M_{0})
\geq
\frac{\sklower^{2}\sv_{\min}(\projVX)^{2}}{\lam_{\nex}}
\geq
\frac{\sklower^{2}(1-\ubE^{2})}{\lam_{\nex}}.
\end{equation*}
Using \(\X=\V_{1}\projVX+\E\) and
\(\A^{-1}\V_{1}=\V_{1}\Lam_{1}^{-1}\), we obtain
\begin{equation*}
\M-\M_{0}
=
(\skmat\E)\trans(\skmat\V_{1})\Lam_{1}^{-1}\projVX
+
(\skmat\X)\trans\skmat\A^{-1}\E.
\end{equation*}
Since \(\norm{\skmat\V_{1}}_{2}\leq\skupper\),
\(\norm{\skmat\X}_{2}\leq\skupper+\skspnorm\ubE\), and
\(\norm{\A^{-1}}_{2}=1/\lam_{1}\), it follows from our assumption that
\begin{equation*}
\norm{\M-\M_{0}}_{2}
\leq
\frac{\skspnorm^{2}\ubE^{2} + 2\skupper\skspnorm\ubE}{\lam_{1}}
<
\frac{\sklower^{2}(1-\ubE^{2})}{\lam_{\nex}}
\leq
\sv_{\min}(\M_{0}).
\end{equation*}
Thus, \(\M\) is nonsingular.
By~\eqref{eq:search_basis_relations}, \(\C\) is nonsingular and
\begin{equation*}
\XS\trans\skmat\trans\skmat\A^{-1}\XS
=
\C\trans\M\C.
\end{equation*}
The remaining conclusions follow from
Lemma~\ref{lem:one-step_equiv_exact_TraceMIN_sTraceMIN}.
\end{proof}

Combining Lemma~\ref{lem:one-step_equiv_exact_TraceMIN_sTraceMIN},
Theorem~\ref{thm:local_unique_solvability_correction_sTraceMIN},
and the nonincreasing subspace distance under exact inverse iteration,
we obtain the following result:

\begin{theorem}[Local equivalence of TraceMIN and sTraceMIN]\label{thm:local_equiv_TraceMIN_sTraceMIN}
Using the notation of~\eqref{eq:A_spectral_decomposition} 
and Lemma~\ref{lem:nearby_subspace_embedding},
suppose the fixed sketching matrix \(\skmat\) satisfies~\eqref{eq:subspace_embedding}
on \(\Span\{\V_{1}\}\) with \(0 < \skeps < 1\).
Let \(\X\iterind{k}\) and \(\XS\iterind{k}\) denote the bases of the search subspaces
at iteration \(k\) of TraceMIN and sTraceMIN, respectively.
Assume that the correction equations~\eqref{eq:TraceMIN_correction} and~\eqref{eq:sTraceMIN_correction_S} are solved exactly.
Suppose \(\X\iterind{0}\), \(\XS\iterind{0}\), and \(\C\iterind{0}\)
satisfy~\eqref{eq:search_basis_relations}, where \(\C\iterind{0}\) is nonsingular.
Let \(\ubE\geq0\) satisfy the quadratic inequality~\eqref{eq:quad_ineq_ubE_straceMIN} in
Theorem~\ref{thm:local_unique_solvability_correction_sTraceMIN},
and assume that
\begin{equation*}
\dist\bigl(\Span\{\X\iterind{0}\},\Span\{\V_{1}\}\bigr)
\leq\ubE.
\end{equation*}
Then, at every iteration, the correction equations
\eqref{eq:TraceMIN_correction} and~\eqref{eq:sTraceMIN_correction_S}
have unique constrained solutions, the basis
whitening step is well-defined, and
\begin{equation*}
\Span\{\X\iterind{k}\}=\Span\{\XS\iterind{k}\}
\end{equation*}
for every \(k \geq 0\).
\end{theorem}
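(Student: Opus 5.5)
We argue by induction on \(k\) that, at every iteration, the two search subspaces coincide and lie within distance \(\ubE\) of \(\Span\{\V_{1}\}\). Precisely, the hypothesis at step \(k\) is
\begin{equation*}
\Span\{\X\iterind{k}\}=\Span\{\XS\iterind{k}\},
\qquad
\dist\bigl(\Span\{\X\iterind{k}\},\Span\{\V_{1}\}\bigr)\leq\ubE .
\end{equation*}
For \(k=0\), the first relation follows from~\eqref{eq:search_basis_relations} with \(\C\iterind{0}\) nonsingular, and the second is assumed. As noted in the proof of Theorem~\ref{thm:local_unique_solvability_correction_sTraceMIN}, the inequality~\eqref{eq:quad_ineq_ubE_straceMIN} implies \(\skspnorm\ubE<\sklower\). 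Hence Lemma~\ref{lem:nearby_subspace_embedding} (or Corollary~\ref{cor:local_uniformity_subspace_embedding}) applies at every iterate satisfying the hypothesis. In particular, \(\skmat\X\iterind{k}\) has full column rank, so whitening is well-defined and preserves the column space.

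For the inductive step, assume the hypothesis at step \(k\). Let \(\X\iterind{k}\) be the orthonormal TraceMIN basis and \(\XS\iterind{k}\) the sketch-orthonormal sTraceMIN basis. Since the spans coincide, \(\XS\iterind{k}=\X\iterind{k}\C\) for some nonsingular \(\C\), so~\eqref{eq:search_basis_relations} holds. Here we also note that the RR and sRR post-processing only multiply the bases by nonsingular matrices (\(\Z\), \(\whiteingR^{-1}\)), so the spans are unchanged. Theorem~\ref{thm:local_unique_solvability_correction_sTraceMIN} then gives that both correction equations have unique solutions and that the updated bases satisfy \(\Span\{\YS\}=\Span\{\Y\}\). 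By the proof of Lemma~\ref{lem:one-step_equiv_exact_TraceMIN_sTraceMIN}, this common span equals \(\A^{-1}\Span\{\X\iterind{k}\}\). After whitening, QR, and the Schur or spectral rotations, which preserve spans (whitening being well-defined by the next paragraph), we get \(\Span\{\X\iterind{k+1}\}=\Span\{\XS\iterind{k+1}\}=\A^{-1}\Span\{\X\iterind{k}\}\).

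The remaining step, and the main point, is showing \(\dist(\A^{-1}\Span\{\X\},\Span\{\V_{1}\})\leq\dist(\Span\{\X\},\Span\{\V_{1}\})\). This is the same nonincreasing-distance fact used for Theorem~\ref{thm:local_equiv_SI_sSI}, and we would prove it directly via tangents. Write \(\X=\V_{1}\projVX+\V_{2}\E_{2}\) with \(\projVX=\V_{1}\trans\X\) nonsingular; nonsingularity holds because \(\sv_{\min}(\projVX)^{2}\geq1-\ubE^{2}>0\) and \(\ubE<1\). Then \(\dist=\sin\theta_{\max}\), where \(\tan\theta_{\max}=\norm{\E_{2}\projVX^{-1}}_{2}\). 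Applying \(\A^{-1}\) gives \(\V_{1}\Lam_{1}^{-1}\projVX+\V_{2}\Lam_{2}^{-1}\E_{2}\), whose tangent matrix is \(\Lam_{2}^{-1}\E_{2}\projVX^{-1}\Lam_{1}\). Its norm is at most \((\lam_{\nex}/\lam_{\nex+1})\norm{\E_{2}\projVX^{-1}}_{2}\leq\norm{\E_{2}\projVX^{-1}}_{2}\), using \(\lam_{\nex}<\lam_{\nex+1}\). Since \(\sin\) is monotone in \(\tan\) on \([0,\pi/2)\), the distance does not increase, so it remains at most \(\ubE\) and the induction closes.

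The only care needed is the bookkeeping that every basis change is by a nonsingular matrix, so spans are tracked exactly; the rest is routine.
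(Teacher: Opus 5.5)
Your proposal is correct and follows the paper's route: an induction that, at each step, invokes Theorem~\ref{thm:local_unique_solvability_correction_sTraceMIN} and Lemma~\ref{lem:one-step_equiv_exact_TraceMIN_sTraceMIN} to identify both updated spans with \(\A^{-1}\Span\{\X\iterind{k}\}\), and then uses the fact that inverse iteration does not increase the distance to \(\Span\{\V_{1}\}\), so every iterate stays within radius \(\ubE\). The only difference is that you prove this last fact explicitly via the tangent bound \(\norm{\Lam_{2}^{-1}\E_{2}\projVX^{-1}\Lam_{1}}_{2}\leq(\lam_{\nex}/\lam_{\nex+1})\norm{\E_{2}\projVX^{-1}}_{2}\), whereas the paper only cites it.
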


\subsection{Local Closeness of the sRR and Target Subspaces} \label{subsec:local_closeness_RR_sRR}

As shown in Section~\ref{subsec:local_equiv},
when near convergence, SI and sSI, as well as TraceMIN and sTraceMIN,
generate exactly the same search subspaces.
However, in sketched algorithms, the subspace selection step is performed
on the sketched matrix \(\Apsk\), and we use its real Schur decomposition
for subspace selection and convergence tests.
To give a full picture of the two sketched eigensolvers, we need to compare
the \(\nev\)-dimensional subspace obtained from the real Schur decomposition
of \(\Apsk\) with the characteristic subspace associated with the \(\nev\) smallest
eigenvalues of \(\A\).

We further split the matrices in~\eqref{eq:A_spectral_decomposition} as
\begin{equation*}
\V_{1}=[\V_{1;1},\V_{1;2}],
\qquad
\Lam_{1}=\diag\{\Lam_{1;1},\Lam_{1;2}\},
\end{equation*}
where \(\V_{1;1}\in\bbR^{\n\times\nev}\) and
\(\Lam_{1;1}=\diag\{\lam_1,\ldots,\lam_{\nev}\}\).
We first state a specialization of~\cite[Theorem~3.1]{Karow_Kressner_2014}
and derive a bound on the corresponding subspace distance.

\begin{theorem}[Subspace gap under a nonsymmetric perturbation]
\label{thm:karow_kressner_special_case}
Using the notation of~\eqref{eq:A_spectral_decomposition}, let
\(\V=[\V_{1},\V_{2}]\) and
\(\evgap=\lam_{\nev+1}-\lam_{\nev}>0\).
Let \(\Perturb\in\bbR^{\n\times\n}\) be a real perturbation, and set
\(\Perturbnorm=\norm{\Perturb}_{2}\) and
\(\PerturbVnorm=\norm{\Perturb\V_{1;1}}_{2}\).
Suppose that \(\Perturbnorm<\evgap/2\).
Then \(\A+\Perturb\) has exactly \(\nev\) eigenvalues,
counted with algebraic multiplicity, in the half-plane
\begin{equation*}
\left\{
z\in\bbC:
\Re z<\frac{\lam_{\nev}+\lam_{\nev+1}}{2}
\right\}.
\end{equation*}
The associated real spectral characteristic subspace has the form
\begin{equation*}
\Span\left\{
\V\begin{bmatrix}\I_{\nev}\\\N\end{bmatrix}
\right\},
\qquad
\N\in\bbR^{(\n-\nev)\times\nev},
\qquad
\norm{\N}_2\leq\frac{2\Perturbnorm}{\evgap}<1.
\end{equation*}
In particular,
\begin{equation}\label{eq:nonsymmetric_perturbation_gap}
\dist\left(
\Span\{\V_{1;1}\},
\Span\left\{\V\begin{bmatrix}\I_{\nev}\\\N\end{bmatrix}\right\}
\right)
\leq
\min\biggl\{
\frac{2 \Perturbnorm}{\evgap},
\frac{\PerturbVnorm}{\evgap-2\Perturbnorm}
\biggr\}.
\end{equation}
\end{theorem}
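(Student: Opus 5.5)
The plan is to work in the eigenvector coordinates. Let \(\widehat{\Perturb}=\V\trans\Perturb\V\), so that \(\A+\Perturb\) is orthogonally similar to \(\diag\{\Lam_{1;1},\Lam'\}+\widehat{\Perturb}\). Here \(\Lam'=\diag\{\lam_{\nev+1},\ldots,\lam_{\n}\}\). Partition
\[
\widehat{\Perturb}=\begin{bmatrix}\Perturb_{11}&\Perturb_{12}\\\Perturb_{21}&\Perturb_{22}\end{bmatrix}
\]
conformally with \(\nev\) and \(\n-\nev\). Each block has norm at most \(\Perturbnorm\), and \(\norm{[\Perturb_{11};\Perturb_{21}]}_2=\norm{\V\trans\Perturb\V_{1;1}}_2=\PerturbVnorm\).

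For the eigenvalue count I will use a homotopy \(\A+t\Perturb\), \(t\in[0,1]\), and show that no eigenvalue crosses the line \(\Re z=\mu:=(\lam_{\nev}+\lam_{\nev+1})/2\). Take \(z\) with \(\Re z=\mu\). Then \(\A-z\I\) is normal, and \(\sv_{\min}(\A-z\I)\ge\min_i\abs{\lam_i-\mu}\ge\evgap/2\). Hence \(\sv_{\min}(\A+t\Perturb-z\I)\ge\evgap/2-\Perturbnorm>0\). Eigenvalues depend continuously on \(t\), so the count in the half-plane stays \(\nev\), as it is at \(t=0\). This is also the content of the specialization of \cite[Theorem~3.1]{Karow_Kressner_2014}, and I would cite that result for the existence of the invariant subspace.

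For the graph form, an invariant subspace \(\Span\{\V[\I_{\nev};\N]\}\) corresponds to a solution \(\N\) of the Riccati equation
\[
\N(\Lam_{1;1}+\Perturb_{11}+\Perturb_{12}\N)-(\Lam'+\Perturb_{22})\N=\Perturb_{21}.
\]
The plan is to write this as a fixed point \(\N=\Phi(\N)\) by inverting the Sylvester operator \(\mathcal{T}(\N)=\N\Lam_{1;1}-\Lam'\N\).

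\emph{Main obstacle.} The naive bound \(\norm{\mathcal{T}^{-1}}\le 1/\evgap\) in the spectral norm is not automatic, since it is a Sylvester operator with separated spectra. However, the spectra are separated by a vertical line, and a Bhatia--Davis--McIntosh type result gives the needed bound \(\norm{\mathcal{T}^{-1}(\Mat{B})}_2\le c\norm{\Mat{B}}_2/\evgap\) in the Hermitian, line-separated case. I would use it with constant \(1\) if possible. To reach \(2\Perturbnorm/\evgap\), I would instead absorb the diagonal perturbation blocks with the shifted spectral gap \(\evgap-2\Perturbnorm\) (the blocks \(\Lam_{1;1}+\Perturb_{11}\) and \(\Lam'+\Perturb_{22}\) have numerical ranges separated by at least that amount). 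A contraction on the ball \(\norm{\N}_2\le 2\Perturbnorm/\evgap\) then yields existence. I will check that \(\Perturbnorm<\evgap/2\) makes the quadratic term \(\Perturb_{12}\N\) small enough to close the estimate. If this becomes delicate, I would simply import the bound \(\norm{\N}_2\le 2\Perturbnorm/\evgap\) from \cite{Karow_Kressner_2014}. Uniqueness within the ball, together with the eigenvalue count, identifies this subspace with the spectral one for the left half-plane.

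For the distance bound, note that \(\dist=\norm{\N}_2/\sqrt{1+\norm{\N}_2^2}\le\norm{\N}_2\), since the subspaces correspond to the graphs of \(\zeromat\) and \(\N\) in the orthonormal coordinates \(\V\). This gives the first term directly. For the second term, I rewrite the Riccati equation as
\[
\N(\Lam_{1;1}+\Perturb_{11}+\Perturb_{12}\N)-(\Lam'+\Perturb_{22})\N=\Perturb_{21}.
\]
The coefficient matrix on the left has the spectrum of the first part inside \(\Re z<\mu-(\evgap/2-\Perturbnorm)\) and the second part inside \(\Re z>\mu+(\evgap/2-\Perturbnorm)\), since \(\norm{\Perturb_{12}\N}_2\) is controlled within the same budget. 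A separation estimate with gap \(\evgap-2\Perturbnorm\) then gives \(\norm{\N}_2\le\norm{\Perturb_{21}}_2/(\evgap-2\Perturbnorm)\le\PerturbVnorm/(\evgap-2\Perturbnorm)\). Taking the minimum of the two bounds completes the argument.
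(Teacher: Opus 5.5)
The genuine gap is in the second term of the minimum. Your homotopy argument for the eigenvalue count is correct (the paper simply cites Karow--Kressner). Your first distance bound, $\mygap=\norm{\N}_2/\sqrt{1+\norm{\N}_2^2}\le\norm{\N}_2\le 2\Perturbnorm/\evgap$, is the paper's.

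For the second term you apply a ``separation estimate with gap $\evgap-2\Perturbnorm$'' to $\N\mapsto\N\Th-(\Lam'+\Perturb_{22})\N$, with $\Th=\Lam_{1;1}+\Perturb_{11}+\Perturb_{12}\N$, and justify it by where the spectra of the two blocks lie. But $\Th$ is in general nonnormal. Spectral separation alone does not give $\norm{\N}_2\le\norm{\Perturb_{21}}_2/(\evgap-2\Perturbnorm)$; a constant-one estimate needs separated numerical ranges.

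The real part of the numerical range of $\Th$ is only bounded by $\lam_{\nev}+\norm{[\Perturb_{11},\Perturb_{12}][\I;\N]}_2\le\lam_{\nev}+\Perturbnorm\sqrt{1+\norm{\N}_2^2}$. Since $\Perturb_{11}$ alone may already use the whole budget $\Perturbnorm$, the claim that ``$\Perturb_{12}\N$ is controlled within the same budget'' is false. (The location of the spectrum of $\Th$ can be obtained from Bauer--Fike, but that is not what a Sylvester bound needs.)

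What your route actually certifies is the gap $\evgap-\Perturbnorm-\Perturbnorm\sqrt{1+\norm{\N}_2^2}$. This is strictly smaller than $\evgap-2\Perturbnorm$ whenever $\N\neq\zeromat$. It becomes negative as $\Perturbnorm\to\evgap/2$, where $\norm{\N}_2$ can approach $1$.

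A related loss affects your contraction attempt for the graph form. The self-map inequality $\Perturbnorm(1+r^2)\le(\evgap-2\Perturbnorm)r$ has no solution once $\Perturbnorm>\evgap/4$. So your fallback to Karow--Kressner for $\norm{\N}_2\le2\Perturbnorm/\evgap$ is necessary, exactly as in the paper.

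The missing idea, which is the paper's, is to leave all of $\Perturb$ on the right-hand side and use the \emph{unperturbed} operator $\N\mapsto\Lam'\N-\N\Lam_{1;1}$. Its coefficients are real diagonal with spectra on opposite sides of $(\lam_{\nev}+\lam_{\nev+1})/2$. Testing with a top singular pair $\N\Mat{w}=\sigma\Mat{u}$, $\N\trans\Mat{u}=\sigma\Mat{w}$ gives $\norm{\Lam'\N-\N\Lam_{1;1}}_2\ge\evgap\norm{\N}_2$ with constant one. So the ``main obstacle'' you flag is not actually an obstacle.

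The invariance condition then reads $\Lam'\N-\N\Lam_{1;1}=-[-\N,\I]\V\trans\Perturb(\V_{1;1}+[\V_{1;2},\V_{2}]\N)$. Its right-hand side has norm at most $\sqrt{1+\norm{\N}_2^2}\,(\PerturbVnorm+\Perturbnorm\norm{\N}_2)$. Dividing by $\sqrt{1+\norm{\N}_2^2}$ gives $\evgap\mygap\le\PerturbVnorm+\Perturbnorm\norm{\N}_2\le\PerturbVnorm+2\Perturbnorm\mygap$. Here $\norm{\N}_2\le2\mygap$ uses $\norm{\N}_2<1$. Hence $\mygap\le\PerturbVnorm/(\evgap-2\Perturbnorm)$.

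This bounds the distance $\mygap$ rather than $\norm{\N}_2$, and it has $\PerturbVnorm$ rather than $\norm{\Perturb_{21}}_2$ in the numerator. The sharper inequality you state is not delivered by a spectral-separation argument and would need a finer proof.
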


\begin{proof}
The eigenvalue count and characteristic subspace representation follow from the specialization
of~\cite[Theorem~3.1]{Karow_Kressner_2014}.
Since \(\V\) is orthogonal,
\begin{equation}\label{eq:perturbed_subspace_gap_graph}
\begin{aligned}
\mygap
&=\dist\left(
\Span\{\V_{1;1}\},
\Span\left\{\V\begin{bmatrix}\I_{\nev}\\\N\end{bmatrix}\right\}
\right)\\
&=\norm{\N(\I_{\nev}+\N\trans\N)^{-1/2}}_2
=\frac{\norm{\N}_2}{\sqrt{1+\norm{\N}_2^2}}
\geq\frac{\norm{\N}_2}{2},
\end{aligned}
\end{equation}
where the last inequality uses \(\norm{\N}_2<1\).
The equation~\eqref{eq:perturbed_subspace_gap_graph} gives the first bound in~\eqref{eq:nonsymmetric_perturbation_gap}.
On the other hand, the invariance of this characteristic subspace gives
\begin{equation*}
[-\N,\I_{\n-\nev}]\V\trans(\A+\Perturb)\V
\begin{bmatrix}\I_{\nev}\\\N\end{bmatrix}
=\zeromat,
\end{equation*}
or equivalently,
\begin{equation*}
\diag\{\Lam_{1;2},\Lam_2\}\N-\N\Lam_{1;1}
=
-[-\N,\I_{\n-\nev}]\V\trans\Perturb
\bigl(\V_{1;1}+[\V_{1;2},\V_{2}]\N\bigr).
\end{equation*}
Since the spectra of \(\Lam_{1;1}\) and
\(\diag\{\Lam_{1;2},\Lam_2\}\) are separated by \(\evgap\), we obtain
\begin{equation*}
\begin{aligned}
\evgap\norm{\N}_2
&\leq\norm{\diag\{\Lam_{1;2},\Lam_2\}\N-\N\Lam_{1;1}}_2\\
&\leq\sqrt{1+\norm{\N}_2^2}
\norm{\Perturb\bigl(\V_{1;1}+[\V_{1;2},\V_{2}]\N\bigr)}_2\\
&\leq\sqrt{1+\norm{\N}_2^2}
\bigl(\PerturbVnorm+\Perturbnorm\norm{\N}_2\bigr).
\end{aligned}
\end{equation*}
Dividing by \(\sqrt{1+\norm{\N}_2^2}\)
and using~\eqref{eq:perturbed_subspace_gap_graph} gives
\begin{equation*}
\evgap\mygap
\leq\PerturbVnorm+\Perturbnorm\norm{\N}_2
\leq\PerturbVnorm+2\Perturbnorm\mygap.
\end{equation*}
Since \(\evgap-2\Perturbnorm>0\), this proves the second bound in~\eqref{eq:nonsymmetric_perturbation_gap}.
\end{proof}

We now consider the subspace selected by sRR.
The real Schur decomposition of \(\Apsk\) is
\begin{equation}\label{eq:sRR_schur_decomposition}
\Apsk
=
\ZS \T \ZS\trans,
\qquad
\ZS\trans \ZS = \I_{\nex},
\end{equation}
with the diagonal blocks of \(\T\) ordered by nondecreasing real parts of
their eigenvalues.
When \(\T(\nev+1,\nev) = 0\), we can partition \(\ZS\) and \(\T\) as
\begin{equation}\label{eq:sRR_schur_partition}
\ZS = [\Z_{\skmat, 1}, \Z_{\skmat, 2}],
\qquad
\T=
\begin{bmatrix}
\T_{1, 1}& \T_{1, 2}\\
&\T_{2, 2}
\end{bmatrix},
\qquad
\Z_{\skmat, 1} \in \bbR^{\nex \times \nev}.
\end{equation}
The following theorem gives sufficient conditions for this partition to be
well-defined and bounds the gap between the target characteristic subspace
\(\Span\{\V_{1;1}\}\) and the subspace
\(\Span\{\XS\Z_{\skmat,1}\}\) selected by sRR.
As in Theorems~\ref{thm:local_equiv_SI_sSI}
and~\ref{thm:local_equiv_TraceMIN_sTraceMIN},
\(\skmat\) is only required to embed \(\Span\{\V_{1}\}\);
no embedding assumption is imposed on \(\Span\{\X,\A\X\}\).

\begin{theorem}[Local gap between the target and real Schur sRR subspaces]
\label{thm:local_gap_RR_sRR}
Using the notation of~\eqref{eq:A_spectral_decomposition}
and Lemma~\ref{lem:nearby_subspace_embedding},
suppose the fixed sketching matrix \(\skmat\)
satisfies~\eqref{eq:subspace_embedding}
on \(\Span\{\V_{1}\}\) with \(0<\skeps<1\).
Let \(\X\), \(\XS\), and \(\C\)
satisfy~\eqref{eq:search_basis_relations},
and consider the real Schur decomposition
in~\eqref{eq:sRR_schur_decomposition}.
Assume that
\(\evgap=\lam_{\nev+1}-\lam_{\nev}>0\),
and set \(\omega=\lam_{\n}-\lam_1\).
Fix
\begin{equation*}
0\leq\ubE
<
\frac{\sklower\evgap}
{\skspnorm(\evgap+2\omega)}
< 1,
\end{equation*}
and assume that
\begin{equation*}
\dist\bigl(\Span\{\X\},\Span\{\V_{1}\}\bigr)
\leq\ubE.
\end{equation*}
Then the selection of the first \(\nev\) Schur vectors
in~\eqref{eq:sRR_schur_decomposition}
does not split any \(2\times2\) diagonal block,
so the partition in~\eqref{eq:sRR_schur_partition}
is well-defined.
Moreover,
\begin{equation*}
\dist\bigl(
\Span\{\V_{1;1}\},
\Span\{\XS\Z_{\skmat,1}\}
\bigr)
\leq
\min\biggl\{
\frac{2\skspnorm \omega \ubE}{\evgap (\sklower - \skspnorm \ubE)},
\frac{\skspnorm\omega\ubE}
{\sklower\evgap-\skspnorm(\evgap+2\omega)\ubE}
\biggr\}.
\end{equation*}
\end{theorem}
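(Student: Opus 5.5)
The plan is to realize the real Schur sRR subspace as an exact spectral characteristic subspace of a perturbed matrix \(\A+\Perturb\), and then invoke Theorem~\ref{thm:karow_kressner_special_case}.

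\emph{Step 1 (setup).} The hypothesis on \(\ubE\) gives \(\skspnorm\ubE<\sklower\). By Lemma~\ref{lem:nearby_subspace_embedding}, \(\skmat\X\) has full column rank and \(\sv_{\min}(\skmat\X)\geq\sklower-\skspnorm\ubE\). Hence the sketched projector
\[
\P_{\skmat}=\XS(\skmat\XS)\trans\skmat=\X(\skmat\X)\pinv\skmat
\]
is well defined, and \(\TrMINPsk=\I_{\n}-\P_{\skmat}\) annihilates \(\Span\{\X\}\).

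\emph{Step 2 (the perturbation).} Set \(\Perturb=-\TrMINPsk\A\P_{\skmat}\). Then \((\A+\Perturb)\XS=\P_{\skmat}\A\XS=\XS\Apsk\), so \(\Span\{\XS\}\) is invariant under \(\A+\Perturb\). The restriction of \(\A+\Perturb\) to this subspace is represented by \(\Apsk\). Consequently the eigenvalues of \(\Apsk\) are \(\nex\) of the eigenvalues of \(\A+\Perturb\), and every Schur-invariant subspace \(\Span\{\ZS(:,1:k)\}\) of \(\Apsk\) maps to an invariant subspace \(\Span\{\XS\ZS(:,1:k)\}\) of \(\A+\Perturb\).

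To make \(\Perturb\) small, use \(\TrMINPsk\X=\zeromat\) to write
\[
\TrMINPsk\A\X=\TrMINPsk\R,\qquad \R=(\I_{\n}-\X\X\trans)\A\X .
\]
Shift by \(c=(\lam_1+\lam_{\n})/2\) and split \(\X=\V_1\V_1\trans\X+\E\) as in the proof of Theorem~\ref{thm:local_unique_solvability_correction_sTraceMIN}. This gives \(\R=(\I_{\n}-\X\X\trans)(\A-c\I_{\n})\X\), in which each of the two pieces carries a factor of norm at most \(\ubE\) and a factor of norm at most \(\omega/2\). Hence \(\norm{\R}_2\leq\omega\ubE\). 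Combined with \(\norm{(\skmat\X)\pinv\skmat}_2\leq\skspnorm/(\sklower-\skspnorm\ubE)\), the target estimate is
\[
\Perturbnorm,\ \PerturbVnorm\ \leq\ \frac{\skspnorm\omega\ubE}{\sklower-\skspnorm\ubE}.
\]
A direct check shows this is exactly what is needed. The condition \(\Perturbnorm<\evgap/2\) is equivalent to \(\ubE<\sklower\evgap/\bigl(\skspnorm(\evgap+2\omega)\bigr)\). Inserting the estimate into the two bounds of Theorem~\ref{thm:karow_kressner_special_case} reproduces both terms of the claimed minimum; the second term uses only \(\PerturbVnorm\leq\Perturbnorm\).

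\emph{Main obstacle.} The hard step is making the factor \(\TrMINPsk\) harmless in \(\norm{\TrMINPsk\R}_2\). Bounding it naively by \(\norm{\TrMINPsk}_2\) loses an extra condition-number factor. I would first try to exploit \(\X\trans\R=\zeromat\). If that fails, I would modify the perturbation, for instance to \(\Perturb=-\TrMINPsk\R(\skmat\X)\pinv\skmat\) plus a correction lying in \(\Span\{\X\}\) that leaves the restriction equal to \(\Apsk\), in order to absorb the oblique part. As a fallback, I would accept a slightly weaker constant.

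\emph{Step 3 (block structure and identification).} Theorem~\ref{thm:karow_kressner_special_case} gives exactly \(\nev\) eigenvalues of \(\A+\Perturb\) in the half-plane \(\Re z<(\lam_{\nev}+\lam_{\nev+1})/2\), and \(\Span\{\V_1\}\) has no further room for others. The eigenvalues of \(\Apsk\) form a sub-multiset of those of \(\A+\Perturb\).

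One must argue that all \(\nev\) of these left eigenvalues belong to \(\Apsk\). To do this, apply Theorem~\ref{thm:karow_kressner_special_case} to the spectral subspace, which is within distance \(<1\) of \(\Span\{\V_{1;1}\}\subseteq\Span\{\V_1\}\). Then use invariance together with \(\dist(\Span\{\X\},\Span\{\V_1\})\leq\ubE\) to show that this subspace lies in \(\Span\{\XS\}\). A cleaner alternative is a continuity (homotopy) argument in \(\ubE\).

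Conjugate pairs share their real part and fall on the same side of the separating line. Reordering by real part therefore places exactly the \(\nev\) left eigenvalues first, which forces \(\T(\nev+1,\nev)=0\). The subspace \(\Span\{\XS\Z_{\skmat,1}\}\) is then the real spectral characteristic subspace of \(\A+\Perturb\) for these eigenvalues, and the distance bound follows from~\eqref{eq:nonsymmetric_perturbation_gap}.
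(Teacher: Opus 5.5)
There is a genuine gap, and it lies in your choice of perturbation.

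\textbf{The norm bound.} Your \(\Perturb=-\TrMINPsk\A\P_{\skmat}=-\TrMINPsk\R(\skmat\X)\pinv\skmat\) contains two oblique factors. They satisfy \(\norm{\TrMINPsk}_2=\norm{(\skmat\X)\pinv\skmat}_2=\norm{\P_{\skmat}}_2\), and each is bounded only by \(\skspnorm/(\sklower-\skspnorm\ubE)\). So what you actually get is \(\Perturbnorm\leq\bigl(\skspnorm/(\sklower-\skspnorm\ubE)\bigr)^{2}\omega\ubE\). This is sharp in general: for \(\n=2\), \(\X=\unitvec_{1}\), \(\skmat=[1,\,a]\) one finds \(\Perturbnorm=(1+a^{2})\norm{\R}_2\). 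That bound neither yields \(\Perturbnorm<\evgap/2\) under the stated radius nor the claimed distance bounds. Since \(\skspnorm\) is typically large for sketching matrices, the loss is not a ``slightly weaker constant.''

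\textbf{Your proposed remedy.} It aims at the wrong factor. The columns of \(\R\) lie in \(\Span\{\X\}^{\perp}\), so \(\norm{\TrMINPsk\R\c}_2^{2}=\norm{\R\c}_2^{2}+\norm{\P_{\skmat}\R\c}_2^{2}\). Thus \(\X\trans\R=\zeromat\) only shows that \(\TrMINPsk\) cannot shrink \(\R\). A correction with columns in \(\Span\{\X\}\) also cannot cancel the excess \(-\TrMINPsk\R\bigl((\skmat\X)\pinv\skmat-\X\trans\bigr)\). The columns of that excess lie in the range of \(\TrMINPsk\), which meets \(\Span\{\X\}\) only in \(\zeromat\).

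\textbf{The missing observation.} The only constraint is \(\Perturb\X=-\TrMINPsk\R\), i.e., \((\A+\Perturb)\X=\X\C\Apsk\C^{-1}\). So any left inverse of \(\X\) may replace \((\skmat\X)\pinv\skmat\). The paper takes the orthonormal one, \(\Perturb=-\TrMINPsk\R\X\trans\). This gives \(\Perturbnorm\leq\norm{\TrMINPsk}_2\norm{\R}_2\leq\skspnorm\omega\ubE/(\sklower-\skspnorm\ubE)\) immediately. The target bound keeps exactly one factor \(\norm{\TrMINPsk}_2\); it is the right-hand oblique factor that has to go.

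\textbf{Step~3.} As written, it only gestures at an argument (``no further room,'' containment in \(\Span\{\XS\}\), a homotopy). The paper's choice of perturbation closes it. With \(\Perturb=-\TrMINPsk\R\X\trans\) one has \(\Perturb\X_{\perp}=\zeromat\) for an orthogonal completion \([\X,\X_{\perp}]\). Hence \([\X,\X_{\perp}]\trans(\A+\Perturb)[\X,\X_{\perp}]\) is block upper triangular, with diagonal blocks \(\C\Apsk\C^{-1}\) and the symmetric matrix \(\X_{\perp}\trans\A\X_{\perp}\).

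Since \(\norm{\V_{1}\trans\X_{\perp}}_2\leq\ubE\), every eigenvalue of \(\X_{\perp}\trans\A\X_{\perp}\) is at least \(\lam_{\nex+1}-\omega\ubE^{2}\). Because \(\omega\ubE^{2}<\evgap/2\), this lies strictly to the right of \((\lam_{\nev}+\lam_{\nev+1})/2\). Therefore all \(\nev\) left eigenvalues from Theorem~\ref{thm:karow_kressner_special_case} belong to \(\Apsk\). This gives the non-splitting of \(2\times2\) blocks and identifies \(\Span\{\XS\Z_{\skmat,1}\}\) as the corresponding spectral subspace.

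With your \(\Perturb\), in general \(\Perturb\X_{\perp}\neq\zeromat\). The complementary block is then a nonsymmetric oblique compression of \(\A\), so this eigenvalue localization is not directly available.
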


\begin{proof}

Define a perturbation of \(\A\) by
\begin{equation*}
\Perturb=-\TrMINPsk\R\X\trans,
\qquad
\Perturbnorm=\norm{\Perturb}_{2},
\end{equation*}
where \(\TrMINPsk=\I_{\n}-\XS\XS\trans\skmat\trans\skmat\) is the sketched projector and \(\R=\A\X-\X\Ap\).
The relationship between the spectral of \(\A\) and \(\Apsk\) can be represented by
\begin{equation}\label{eq:original_matrix_perturbation}
(\A+\Perturb)\X
=\X(\C\Apsk\C^{-1}).
\end{equation}
Therefore, what we need is to show that the subspace selected
by sRR corresponds to the \(\nev\) eigenvalues of
\(\A+\Perturb\) with the smallest real parts, and then apply
Theorem~\ref{thm:karow_kressner_special_case}
to bound its distance from \(\Span\{\V_{1;1}\}\).

We first estimate \(\norm{\Perturb}_{2}\).
On the one hand, using Lemma~\ref{lem:nearby_subspace_embedding},
we have
\begin{equation*}
\norm{\TrMINPsk}_{2}
=
\norm{\XS\XS\trans\skmat\trans\skmat}_{2}
\leq \norm{\XS}_{2} \norm{\skmat \XS}_{2} \norm{\skmat}_{2}
\leq \frac{\skspnorm}{\sklower-\skspnorm\ubE}.
\end{equation*}
On the other hand, by define \(\TrMINP=\I_{\n}-\X\X\trans\) and \(\mu=(\lam_1+\lam_{\n})/2\), we obtain
\(\R=\A\X-\X\Ap=\TrMINP\A\X=\TrMINP(\A-\mu\I_{\n})\X\).
Similar to the proof of Theorem~\ref{thm:local_unique_solvability_correction_sTraceMIN}, by setting \(\projVX=\V_{1}\trans\X\) and 
\(\E=(\I_{\n}-\V_{1}\V_{1}\trans)\X\),
we can represent \(\X\) in
\begin{equation*}
\X=\V_{1}\projVX+\E,\qquad
\norm{\projVX}_{2}\leq1,
\qquad
\norm{\E}_{2}\leq\ubE.
\end{equation*}
Then, the residual satisfies
\begin{equation*}
\begin{aligned}
\norm{\R}_2
&=\norm{\TrMINP(\A-\mu\I_{\n})\X}_2
=\norm{\TrMINP\V_{1}(\Lam_1-\mu\I_{\nex})\projVX
+\TrMINP(\A-\mu\I_{\n})\E}_2\\
&\leq
\frac{\omega}{2}
\bigl(
\norm{\TrMINP\V_{1}}_{2}\norm{\projVX}_{2}
+\norm{\E}_{2}
\bigr)
\leq\omega\ubE,
\end{aligned}
\end{equation*}
where the last inequality uses \(\norm{\TrMINP\V_{1}}_{2} = \dist\bigl(\Span\{\X\},\Span\{\V_{1}\}\bigr) \leq\ubE\).
The preceding estimates and our assumption imply
\begin{equation} \label{eq:norm_perturb}
\norm{\Perturb}
\leq \norm{\TrMINPsk}_2\norm{\R}_{2}
\leq 
\frac{\skspnorm\omega\ubE}{\sklower-\skspnorm\ubE}
<\frac{\evgap}{2}.
\end{equation}

We next verify that the \(\nev\) eigenvalues of \(\A+\Perturb\)
identified in Theorem~\ref{thm:karow_kressner_special_case}
are eigenvalues of \(\Apsk\).
Choose an orthogonal completion \([\X,\X_{\perp}]\).
Since \(\Perturb\X_{\perp}=\zeromat\),
\eqref{eq:original_matrix_perturbation} gives
\begin{equation}\label{eq:perturbed_matrix_block_form}
[\X,\X_{\perp}]\trans(\A+\Perturb)[\X,\X_{\perp}]
=
\begin{bmatrix}
\C\Apsk\C^{-1} & \X\trans\A\X_{\perp}\\
\zeromat & \X_{\perp}\trans\A\X_{\perp}
\end{bmatrix}.
\end{equation}
The subspace distance assumption and the bound on \(\ubE\) give
\begin{equation*}
\lambda_{\min}(\X_{\perp}\trans\A\X_{\perp})
\geq \lam_{\nex+1}-\omega\ubE^2
> \frac{\lam_{\nev}+\lam_{\nev+1}}{2}.
\end{equation*}
To see this, take a unit vector \(\z\in\Span\{\X_{\perp}\}\),
the assumption gives \(\norm{\V_{1} \trans \z}_{2}\leq\ubE\).
Using the spectral decomposition of \(\A\), we obtain
\begin{equation*}
\z\trans\A \z
\geq\lam_{1}\norm{\V_{1}\V_{1}\trans \z}_{2}^{2}
+\lam_{\nex+1}\norm{\z-\V_{1}\V_{1}\trans \z}_{2}^{2}
\geq\lam_{\nex+1}-\omega\ubE^{2}.
\end{equation*}
And the second inequality follows from
\begin{equation*}
\omega \ubE^{2}
\leq
\omega \ubE
\leq
\frac{\skspnorm\omega \ubE}{\sklower}
\leq
\frac{\skspnorm\omega\ubE}{\sklower-\skspnorm\ubE}
<
\frac{\evgap}{2}
=
\frac{\lam_{\nev+1}-\lam_{\nev}}{2}.
\end{equation*}
Thus, the complementary block contributes no eigenvalues
in the left half-plane of
Theorem~\ref{thm:karow_kressner_special_case}.
By~\eqref{eq:perturbed_matrix_block_form} and similarity,
\(\Apsk\) has exactly \(\nev\) eigenvalues there.
Since conjugate eigenvalues have the same real part,
selecting the first \(\nev\) Schur vectors does not split
any \(2\times2\) diagonal block.

Furthermore, \eqref{eq:original_matrix_perturbation} gives
\begin{equation*}
(\A+\Perturb)\XS\Z_{\skmat,1}
=
\XS\Z_{\skmat,1}\T_{1,1}.
\end{equation*}
Hence, \(\Span\{\XS\Z_{\skmat,1}\}\) is the real spectral
characteristic subspace of \(\A+\Perturb\) associated with these
\(\nev\) eigenvalues.
Since \(\norm{\Perturb\V_{1;1}}_{2}\leq\Perturbnorm<\evgap/2\),
Theorem~\ref{thm:karow_kressner_special_case}
and~\eqref{eq:norm_perturb} yield
\begin{equation*}
\begin{aligned}
\dist\bigl(
\Span\{\V_{1;1}\},\Span\{\XS\Z_{\skmat,1}\}
\bigr)
& \leq
\min\biggl\{
\frac{2\Perturbnorm}{\evgap},
\frac{\Perturbnorm}{\evgap-2\Perturbnorm}
\biggr\} \\
& \leq
\min\biggl\{
\frac{2 \skspnorm \omega \ubE}{\evgap (\sklower - \skspnorm \ubE)},
\frac{\skspnorm\omega\ubE}
{\sklower\evgap-\skspnorm(\evgap+2\omega)\ubE}
\biggr\}.
\end{aligned}
\end{equation*}
This completes the proof.
Note that the first bound is less than \(1\) from~\eqref{eq:norm_perturb}.
\end{proof}

Theorem~\ref{thm:local_gap_RR_sRR}
also establishes local convergence of the selected subspaces.
Under the assumptions of
Theorem~\ref{thm:local_equiv_SI_sSI} or
Theorem~\ref{thm:local_equiv_TraceMIN_sTraceMIN},
the common \(\nex\)-dimensional search subspaces converge
to \(\Span\{\V_{1}\}\).
Define
\begin{equation*}
\ubE_k
=
\dist\bigl(
\Span\{\X\iterind{k}\},
\Span\{\V_{1}\}
\bigr)
\rightarrow0.
\end{equation*}
For sufficiently large \(k\), the radius condition in
Theorem~\ref{thm:local_gap_RR_sRR}
holds with \(\ubE=\ubE_k\),
and the partition in~\eqref{eq:sRR_schur_partition}
is well-defined.
The corresponding subspace distance bound is
\(\bigO(\ubE_k)\), so the selected sRR subspaces
converge to the target characteristic subspace:
\begin{equation*}
\dist\bigl(
\Span\{\XS\iterind{k}\Z_{\skmat,1}\iterind{k}\},
\Span\{\V_{1;1}\}
\bigr)
\rightarrow0.
\end{equation*}

\section{Numerical Experiments} \label{sec:experiments}

\subsection{Experiment Settings} \label{subsec:experiment_settings}

\begin{table}[tbhp]
\centering
\caption{Test matrices before shifting.
The scalars \(\n\) and \(\nnz(\A_{0})\) are the matrix size and number of nonzero entries, respectively.}
\begin{tabular}{rlrr}
\toprule
No. & Matrix \(\A_{0}\) & \(\n\) & \(\nnz(\A_{0})\) \\
\midrule
1  & \texttt{bcsstm21}      &   \(3\,600\) &       \(3\,600\) \\
2  & \texttt{rail\_5177}    &   \(5\,177\) &      \(35\,185\) \\
3  & \texttt{Muu}          &   \(7\,102\) &     \(170\,134\) \\
4  & \texttt{fv1}          &   \(9\,604\) &      \(85\,264\) \\
5  & \texttt{shuttle\_eddy} &  \(10\,429\) &     \(103\,599\) \\
6  & \texttt{barth5}       &  \(15\,606\) &     \(107\,362\) \\
7  & \texttt{Si5H12}       &  \(19\,896\) &     \(738\,598\) \\
8  & \texttt{mario001}     &  \(38\,434\) &     \(204\,912\) \\
9  & \texttt{c-65}         &  \(48\,066\) &     \(360\,428\) \\
10 & \texttt{Andrews}      &  \(60\,000\) &     \(760\,154\) \\
11 & \texttt{Ga3As3H12}    &  \(61\,349\) & \(5\,970\,947\) \\
12 & \texttt{Ga10As10H30}  & \(113\,081\) & \(6\,115\,633\) \\
\bottomrule
\end{tabular}
\label{tab:test_matrices}
\end{table}

All experiments are implemented in double precision using MATLAB R2026a 
on a server equipped with two Intel Xeon Gold 6226R processors 
with a base frequency of 2.90 GHz and 1 TiB of RAM.
The code for reproducing the numerical experiments is available
at \url{https://github.com/JingyuLiuMath/sSI_and_sTraceMIN}.
Our test set consists of the twelve real symmetric matrices from the
SuiteSparse Matrix Collection used
in~\cite{Liu_Yuqi_Ma_Yuxin_Shao_Meiyue_2026}.
Table~\ref{tab:test_matrices} lists their sizes and numbers of nonzero
entries.
To make the matrices in Table~\ref{tab:test_matrices} positive definite,
we shift them by
\begin{equation*}
\A=\A_{0}+\sigma\I,
\qquad
\sigma=\max\bigl\{0,-1.05\cdot\widehat{\lambda}_{\min},
10^{-4}\norm{\A_{0}}_{\infty}-\widehat{\lambda}_{\min}\bigr\},
\end{equation*}
where \(\widehat{\lambda}_{\min}\) is estimated by MATLAB's
\texttt{eigs} with the option \texttt{smallestreal}.
All our numerical experiments are performed on the shifted matrices \(\A\).
For each \(\A\), we compute its \(\nev = 100\) smallest
eigenvalues and their corresponding eigenvectors.
The dimension of the search subspace is set to
\(\nex=\lceil 1.5 \nev\rceil = 150\).

The stopping criteria are
\(\resnorm\leq\tol\) for the standard methods 
and \(\skresnorm\leq\tol\) for the sketched methods,
where \(\resnorm\) and \(\skresnorm\)
are the relative block residuals defined
in~\eqref{eq:rel_res_entry}
and~\eqref{eq:rel_sres_entry}, respectively.
For all algorithms, we set \(\tol=10^{-10}\)
and limit the number of iterations to \(1000\).

The sketching matrices \(\skmat\) are sparse sign sketches with eight nonzero
entries per column~\cite[Section~9.2]{Martinsson_Tropp_2020},
for both sSI and sTraceMIN.
In each column, the row indices are sampled uniformly without
replacement, and the nonzero entries independently take the values
\(\pm 1/ \sqrt{8}\) with equal probability.
We compare sketch dimensions \(\skdim = \nu \nex\) for
\(\nu\in\{2,4,8\}\), corresponding to \(\skdim = 300, 600, 1200\).

To better reflect the performance of practical implementations,
we employ several acceleration techniques.
Both SI and sSI factorize \(\A\) once using MATLAB's
\texttt{decomposition} and reuse the factorization for direct solves in
the inverse iteration steps.
Both TraceMIN and sTraceMIN solve their correction equations
inexactly by performing \(5\) GMRES iterations 
for each right-hand side~\cite{Saad_2003},
with a zero initial guess and no additional preconditioner.
Soft locking is employed for all methods~\cite{Duersch_Shao_Yang_Gu_2018},
with a locking threshold of \(\tol / 100=10^{-12}\).
Locked vectors remain in the search subspace and participate in the RR
or sRR procedure, but are excluded from the inverse or correction solves.
For all four methods, the final relative residuals reported in the
tables are recomputed in the original space
using~\eqref{eq:rel_res_entry}, independently of the outer stopping test.

We perform each configuration three times with different random seeds,
and select the ones with the median total runtime
to report its iteration count, final relative residual
and timing components in the tables and figures.
Within each trial, all four methods use the same initial Gaussian block.
For each sketch dimension and seed, sSI and sTraceMIN use the same
sketching matrix, which remains fixed throughout each run.
The total solver time includes 
estimation of \(\norm{\A}_2\),
matrix factorization for SI and sSI,
outer iterations, all linear solves, final postprocessing
and residual verification.
It excludes matrix loading, formation of the shifted matrix, 
and generation of the initial Gaussian block and the sketching matrix \(\skmat\).

\subsection{Experiments on SI and sSI} \label{subsec:experiment_SI}

Table~\ref{tab:SI} compares the performance of SI and sSI,
with three sketch dimensions considered for sSI.
Since sSI uses \(\skresnorm\) to assess convergence, its final
relative residual \(\resnorm\) in the original space may slightly
exceed the prescribed tolerance \(\tol=10^{-10}\).
Nevertheless, the final relative residuals reported in
Table~\ref{tab:SI} do not exceed \(1.3\times10^{-10}\).
Thus, both SI and sSI achieve residuals comparable to \(\tol\) on all
twelve test problems.

\begin{table}[tb]
\centering
\caption{Runtime in seconds, outer iteration count and final relative
residual in the original space for SI and sSI.
Problem numbers follow Table~\ref{tab:test_matrices}.
We use \(\nev=100\), \(\nex=150\) and \(\skdim=\nu\nex\).
Each configuration is represented by the trial with median total runtime.
The column ``save'' gives the percentage of time saved relative to SI.}
\resizebox{\textwidth}{!}{%
\begin{tabular}{lrrrrrrrrrrrrrrr}
\toprule
No. & \multicolumn{3}{c}{SI} & \multicolumn{4}{c}{sSI ($\nu$ = 2)} & \multicolumn{4}{c}{sSI ($\nu$ = 4)} & \multicolumn{4}{c}{sSI ($\nu$ = 8)} \\
\cmidrule(lr){2-4}\cmidrule(lr){5-8}\cmidrule(lr){9-12}\cmidrule(lr){13-16}
 & time & iter & relres & time & iter & relres & save & time & iter & relres & save & time & iter & relres & save \\
\midrule
1 & 6.5e-01 & 30 & 7.9e-11 & 7.3e-01 & 30 & 7.6e-11 & -12\% & 8.1e-01 & 30 & 7.5e-11 & -24\% & 9.7e-01 & 30 & 7.6e-11 & -48\% \\
2 & 6.8e+00 & 192 & 9.6e-11 & 5.3e+00 & 191 & 1.3e-10 & 22\% & 5.8e+00 & 187 & 1.0e-10 & 14\% & 6.4e+00 & 192 & 9.9e-11 & 5\% \\
3 & 6.5e+00 & 112 & 9.2e-11 & 5.0e+00 & 109 & 1.2e-10 & 23\% & 5.3e+00 & 109 & 1.1e-10 & 19\% & 5.6e+00 & 109 & 1.1e-10 & 14\% \\
4 & 1.5e+01 & 187 & 9.2e-11 & 1.1e+01 & 192 & 1.3e-10 & 23\% & 1.2e+01 & 193 & 1.1e-10 & 18\% & 1.3e+01 & 186 & 1.0e-10 & 13\% \\
5 & 1.5e+01 & 223 & 1.0e-10 & 1.1e+01 & 221 & 1.3e-10 & 28\% & 1.2e+01 & 223 & 1.0e-10 & 24\% & 1.3e+01 & 222 & 1.1e-10 & 17\% \\
6 & 2.7e+01 & 227 & 1.0e-10 & 1.9e+01 & 229 & 1.2e-10 & 29\% & 2.0e+01 & 226 & 1.1e-10 & 28\% & 2.1e+01 & 229 & 1.0e-10 & 23\% \\
7 & 6.9e+01 & 108 & 9.3e-11 & 6.2e+01 & 105 & 1.2e-10 & 11\% & 6.2e+01 & 108 & 9.8e-11 & 10\% & 6.4e+01 & 108 & 9.5e-11 & 7\% \\
8 & 8.6e+01 & 308 & 9.8e-11 & 5.3e+01 & 306 & 1.3e-10 & 39\% & 5.5e+01 & 308 & 1.1e-10 & 36\% & 5.8e+01 & 308 & 1.0e-10 & 33\% \\
9 & 3.3e+02 & 875 & 1.0e-10 & 2.0e+02 & 866 & 1.3e-10 & 39\% & 2.1e+02 & 846 & 1.1e-10 & 37\% & 2.2e+02 & 843 & 1.0e-10 & 34\% \\
10 & 1.4e+02 & 81 & 8.8e-11 & 1.3e+02 & 80 & 1.3e-10 & 11\% & 1.3e+02 & 83 & 9.6e-11 & 12\% & 1.2e+02 & 82 & 9.3e-11 & 14\% \\
11 & 2.9e+02 & 91 & 9.0e-11 & 2.5e+02 & 90 & 1.2e-10 & 16\% & 2.5e+02 & 91 & 1.1e-10 & 14\% & 2.6e+02 & 91 & 9.2e-11 & 12\% \\
12 & 6.8e+02 & 118 & 9.1e-11 & 6.0e+02 & 117 & 1.2e-10 & 12\% & 6.0e+02 & 117 & 1.1e-10 & 11\% & 6.2e+02 & 118 & 9.3e-11 & 8\% \\
\bottomrule
\end{tabular}
}
\label{tab:SI}
\end{table}

The iteration counts of sSI remain close to those of SI for
all three choices of \(\nu\), with a maximum relative increase
of approximately \(3.2\%\).
This observation is further supported by the residual histories
for \texttt{mario001} and \texttt{c-65}
in Figure~\ref{fig:SI_residuals}.
For each problem, the SI curve and the three sSI curves with
\(\nu=2,4,8\) exhibit nearly identical residual decay per iteration.
Similar convergence behavior is also observed for the other ten test problems.
These results indicate that sSI achieves nearly the same
convergence rate as SI on these test problems.

\begin{figure}[tbhp]
\centering
\includegraphics[width=0.48\textwidth]{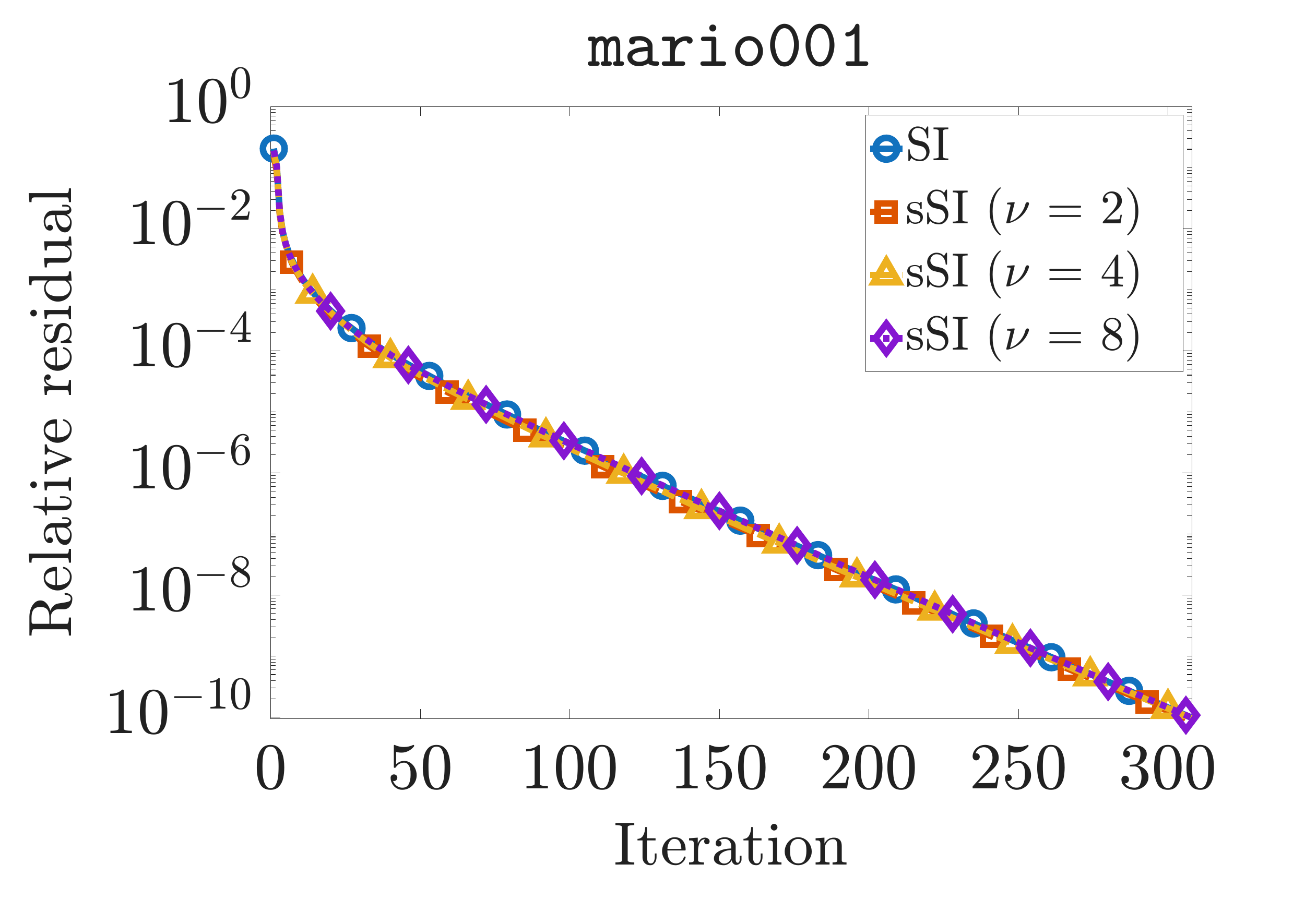}
\hfill
\includegraphics[width=0.48\textwidth]{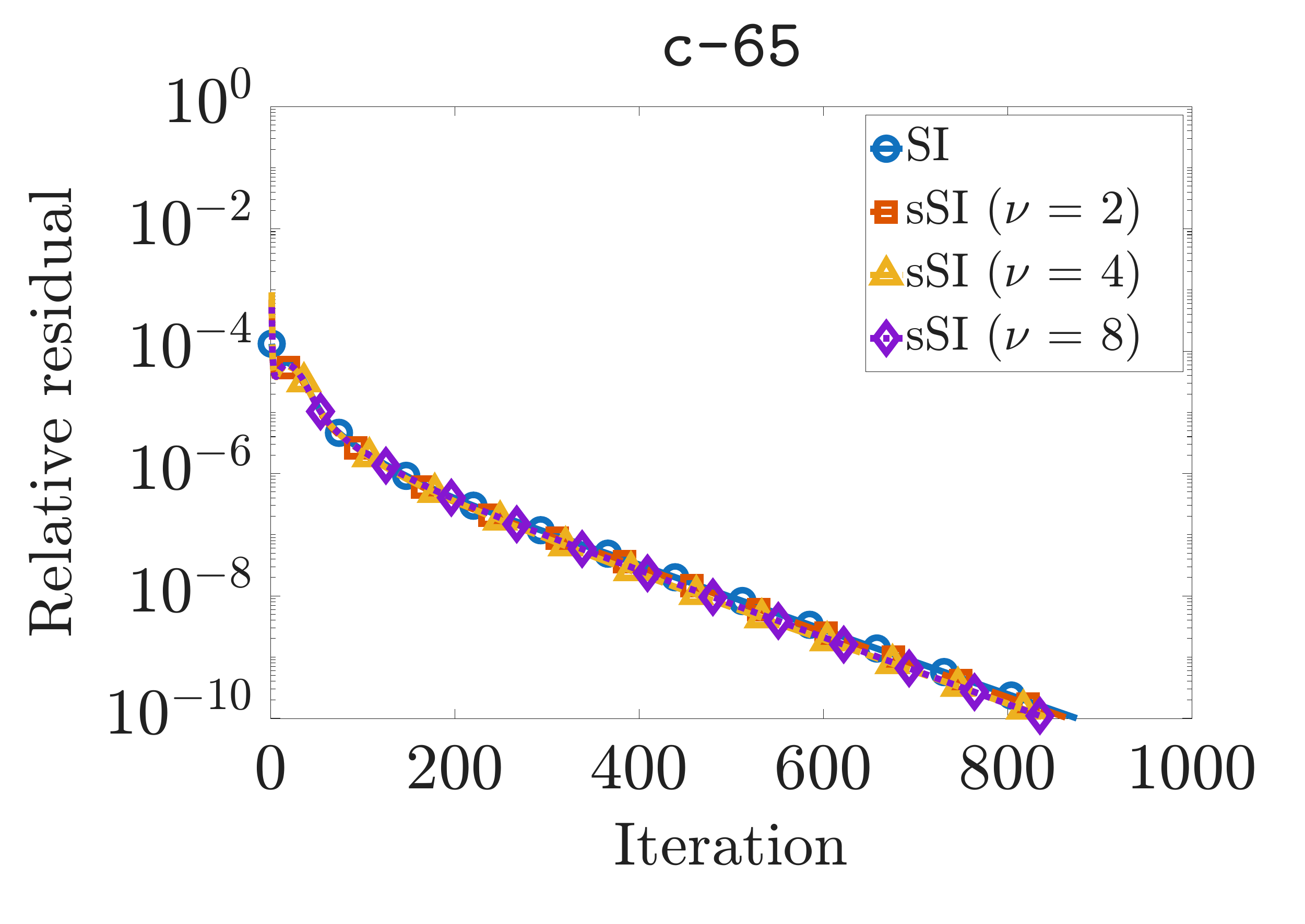}
\caption{Convergence histories of SI and sSI for \texttt{mario001}~(left, No.~8) 
and \texttt{c-65}~(right, No.~9).
Both methods compute \(\nev=100\) smallest eigenpairs using
\(\nex=150\) search vectors.
For sSI, \(\skdim = \nu \nex\) is the sketch dimension.
SI plots the relative block residual \(\resnorm\)
in~\eqref{eq:rel_res_entry}, 
whereas sSI plots its sketched counterpart
\(\skresnorm\) in~\eqref{eq:rel_sres_entry}.
Each curve shows the run with median total runtime among three trials.}
\label{fig:SI_residuals}
\end{figure}

On the other hand, the use of sketching can reduce the computational cost per iteration.
Figure~\ref{fig:SI_ratio_cluster} shows substantial reductions
in the time spent on basis orthonormalization 
(``Orth / Whitening'')
and residual evaluation (``Residual'').
These savings outweigh the additional cost of applying and updating the sketches (``Basis update''),
resulting in an overall speedup.

\begin{figure}[tb]
\centering
\includegraphics[width=0.98\linewidth]{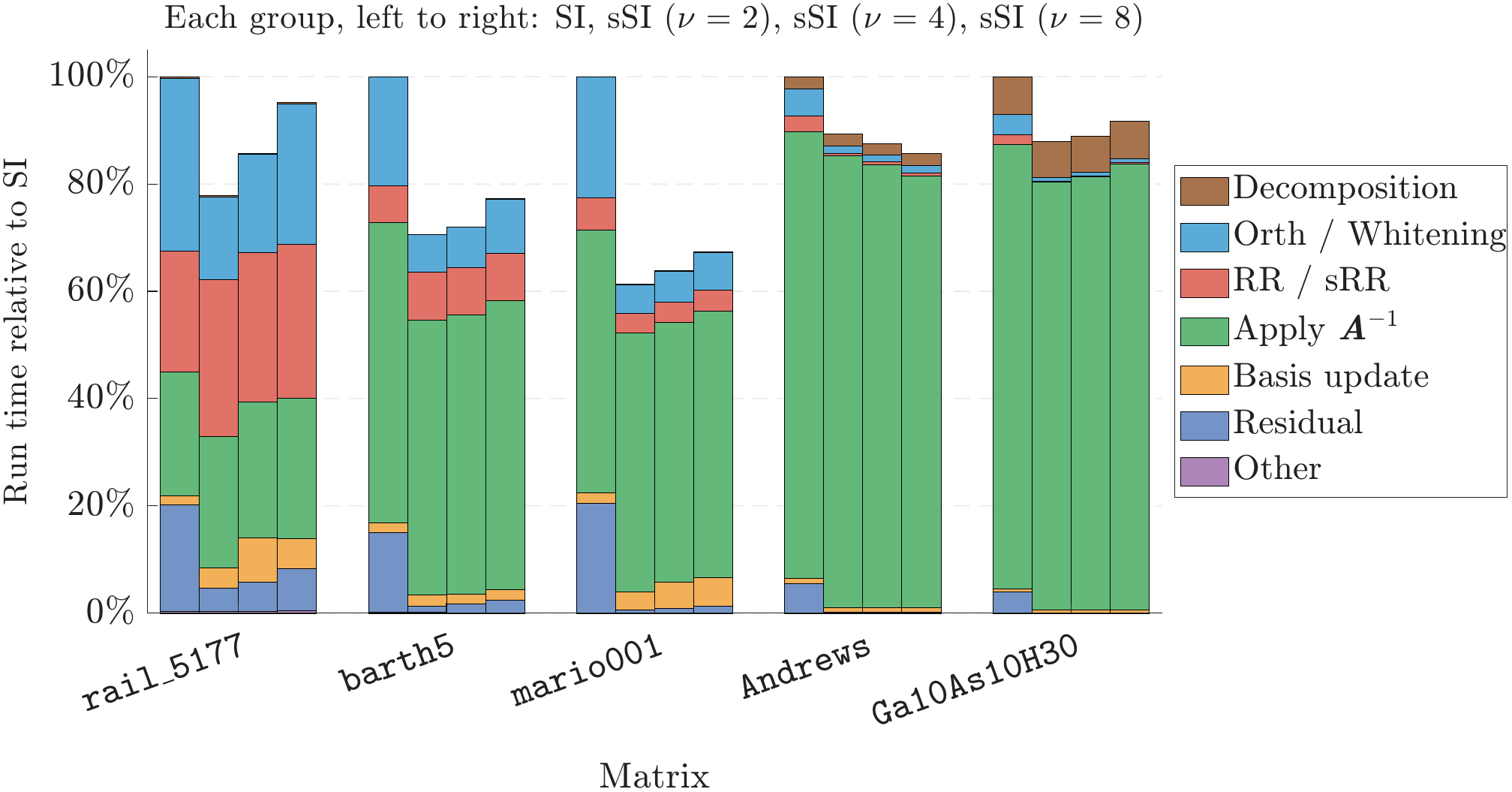}
\caption{Runtime breakdown for \texttt{rail\_5177},
\texttt{barth5}, \texttt{mario001},
\texttt{Andrews} and \texttt{Ga10As10H30}
in Table~\ref{tab:test_matrices}.
Within each group, the bars show SI and sSI with \(\nu=2,4,8\),
from left to right.
Both methods compute \(\nev=100\) smallest eigenpairs using
\(\nex=150\) search vectors.
For sSI, \(\skdim = \nu \nex\) is the sketch dimension.
Each bar represents the run with median total runtime among three
trials and is normalized by the total SI runtime for the same problem.
``Orth / Whitening'' covers basis orthonormalization for SI and whitening for sSI.
``Basis update'' includes basis transformations and, for sSI,
updates of the maintained sketches.
For sSI, ``Orth / Whitening'' and ``RR / sRR'' also include the
final standard QR factorization and RR procedure, respectively.}
\label{fig:SI_ratio_cluster}
\end{figure}

As reported in the ``save'' column of Table~\ref{tab:SI},
sSI typically reduces runtime by \(10\%\)--\(40\%\),
with reductions of \(39\%\) for both \texttt{mario001}
and \texttt{c-65} when \(\nu=2\).
For the smallest matrix, \texttt{bcsstm21}, however,
the sketching overhead outweighs the computational savings,
making sSI slower than SI.

It is also worth noting that for larger problems such as \texttt{Andrews} and \texttt{Ga10As10H30},
sSI yields more modest runtime reductions.
As shown in Figure~\ref{fig:SI_ratio_cluster},
the linear solves \(\A^{-1}\X\) dominate the runtime
for these two problems.
Consequently, savings in the remaining components have
only a limited effect on the total runtime.

Another observation from  Table~\ref{tab:SI} is that increasing \(\nu\) from \(2\) to \(8\)
increases the runtime on most test problems.
This is because sSI with \(\nu=2\) is good enough to converge in nearly the same number of iterations as SI.
And larger sketch dimensions offer little reduction in iterations but increase the cost of sketch application,
basis whitening, and other operations in the sketched space.%
\footnote{For some problems, runs with larger sketches~(\(\nu=4\) or \(8\)) 
can be faster than those with \(\nu=2\),
despite requiring more outer iterations; 
see, for example, \texttt{Andrews} in Figure~\ref{fig:SI_ratio_cluster}.
This behavior is explained by soft locking:
differences in the locking histories can reduce the number
of active columns, potentially lowering the cost of
subsequent linear solves and basis whitening.
However, these savings are specific to the observed runs
and do not represent a general trend.}
These results support the choice \(\skdim=2\nex\) on the
tested problems, as it yields iteration counts close to those
of SI while limiting the cost of operations in the sketched space.

\subsection{Experiments on TraceMIN and sTraceMIN} \label{subsec:experiment_TrMIN}

Table~\ref{tab:TrMIN} reports the performance of TraceMIN
and sTraceMIN on the twelve test problems.
The results are similar to those for SI and sSI
(except for \texttt{c-65}, on which all four configurations
fail to converge within \(1000\) iterations).
Although the final original-space relative residuals \(\resnorm\)
of sTraceMIN can be slightly larger than \(\tol\),
they do not exceed \(1.4\times10^{-10}\) on the converged problems.

\begin{table}[tbhp]
\centering
\caption{Runtime in seconds, outer iteration count and final relative
residual in the original space for TraceMIN and sTraceMIN.
Problem numbers follow Table~\ref{tab:test_matrices}.
We use \(\nev=100\), \(\nex=150\) and \(\skdim=\nu\nex\).
Each configuration is represented by the trial with median total runtime.
The column ``save'' gives the percentage of time saved relative to TraceMIN.
The italicized results for \texttt{c-65} indicate that all four
configurations reach the limit of \(1000\) iterations without convergence;
the reported savings compare runtimes at this fixed iteration limit.}
\resizebox{\textwidth}{!}{%
\begin{tabular}{lrrrrrrrrrrrrrrr}
\toprule
No. & \multicolumn{3}{c}{TrMIN} & \multicolumn{4}{c}{sTrMIN ($\nu$ = 2)} & \multicolumn{4}{c}{sTrMIN ($\nu$ = 4)} & \multicolumn{4}{c}{sTrMIN ($\nu$ = 8)} \\
\cmidrule(lr){2-4}\cmidrule(lr){5-8}\cmidrule(lr){9-12}\cmidrule(lr){13-16}
 & time & iter & relres & time & iter & relres & save & time & iter & relres & save & time & iter & relres & save \\
\midrule
1 & 1.8e+00 & 30 & 8.0e-11 & 2.5e+00 & 31 & 6.6e-11 & -35\% & 2.2e+00 & 31 & 5.5e-11 & -23\% & 2.4e+00 & 30 & 9.5e-11 & -35\% \\
2 & 1.5e+01 & 193 & 9.2e-11 & 1.9e+01 & 191 & 1.4e-10 & -31\% & 1.6e+01 & 193 & 1.0e-10 & -6\% & 1.7e+01 & 193 & 1.0e-10 & -14\% \\
3 & 1.1e+01 & 117 & 8.7e-11 & 1.1e+01 & 117 & 1.3e-10 & 6\% & 8.9e+00 & 116 & 1.1e-10 & 21\% & 9.6e+00 & 116 & 9.6e-11 & 15\% \\
4 & 1.9e+01 & 188 & 9.4e-11 & 1.8e+01 & 187 & 1.2e-10 & 9\% & 1.8e+01 & 187 & 1.1e-10 & 10\% & 1.9e+01 & 188 & 9.8e-11 & 4\% \\
5 & 1.9e+01 & 224 & 9.8e-11 & 1.9e+01 & 222 & 1.3e-10 & 4\% & 1.9e+01 & 224 & 1.0e-10 & 4\% & 1.9e+01 & 224 & 1.0e-10 & 3\% \\
6 & 3.8e+01 & 237 & 9.5e-11 & 3.2e+01 & 234 & 1.2e-10 & 15\% & 3.2e+01 & 233 & 1.1e-10 & 15\% & 3.3e+01 & 239 & 9.6e-11 & 11\% \\
7 & 2.8e+01 & 113 & 9.5e-11 & 2.4e+01 & 111 & 1.2e-10 & 15\% & 2.4e+01 & 111 & 1.0e-10 & 15\% & 2.5e+01 & 113 & 9.2e-11 & 11\% \\
8 & 1.6e+02 & 317 & 9.8e-11 & 1.3e+02 & 307 & 1.3e-10 & 21\% & 1.3e+02 & 310 & 1.1e-10 & 19\% & 1.3e+02 & 315 & 1.0e-10 & 18\% \\
9 & \textit{8.3e+02} & \textit{1000} & \textit{6.7e-07} & \textit{6.6e+02} & \textit{1000} & \textit{8.0e-07} & \textit{21\%} & \textit{6.7e+02} & \textit{1000} & \textit{6.8e-07} & \textit{19\%} & \textit{6.8e+02} & \textit{1000} & \textit{6.8e-07} & \textit{18\%} \\
10 & 8.3e+01 & 87 & 9.2e-11 & 6.7e+01 & 86 & 1.2e-10 & 19\% & 6.7e+01 & 87 & 9.9e-11 & 20\% & 6.8e+01 & 88 & 9.5e-11 & 19\% \\
11 & 2.1e+02 & 158 & 9.8e-11 & 1.8e+02 & 171 & 1.3e-10 & 14\% & 1.8e+02 & 167 & 1.0e-10 & 12\% & 1.8e+02 & 166 & 9.7e-11 & 13\% \\
12 & 3.4e+02 & 190 & 9.3e-11 & 2.8e+02 & 194 & 1.3e-10 & 20\% & 2.7e+02 & 194 & 1.0e-10 & 20\% & 2.7e+02 & 192 & 9.7e-11 & 21\% \\
\bottomrule
\end{tabular}
}
\label{tab:TrMIN}
\end{table}

The outer iteration counts of sTraceMIN remain close to those
of TraceMIN on the converged problems, with a maximum relative
increase of approximately \(8.2\%\).
For all three sketch dimensions, sTraceMIN reduces the runtime
on nine of the eleven converged problems,
with savings typically around \(10\%\)--\(20\%\).

Although sTraceMIN yields smaller relative runtime reductions
than sSI on most test problems, it offers larger reductions
on some of the larger problems.
For \texttt{Andrews} and \texttt{Ga10As10H30},
sTraceMIN reduces runtime by \(19\%\)--\(21\%\) relative to
TraceMIN, compared with \(8\%\)--\(14\%\) for sSI relative to SI.
The timing breakdowns in
Figures~\ref{fig:SI_ratio_cluster}
and~\ref{fig:TrMIN_ratio_cluster} help explain this difference.
For these two problems, the correction stage using \(5\)
GMRES iterations accounts for a substantially smaller
fraction of the TraceMIN runtime than the direct solves
for \(\A^{-1}\X\) do in SI.
Consequently, savings in orthonormalization, RR, and
residual evaluation have a greater impact on the relative
reduction in total runtime.
These results suggest complementary roles for the two methods,
with their relative efficiency depending on both the cost
of the linear solves and the number of outer iterations.

\begin{figure}[tbhp]
\centering
\includegraphics[width=0.98\linewidth]{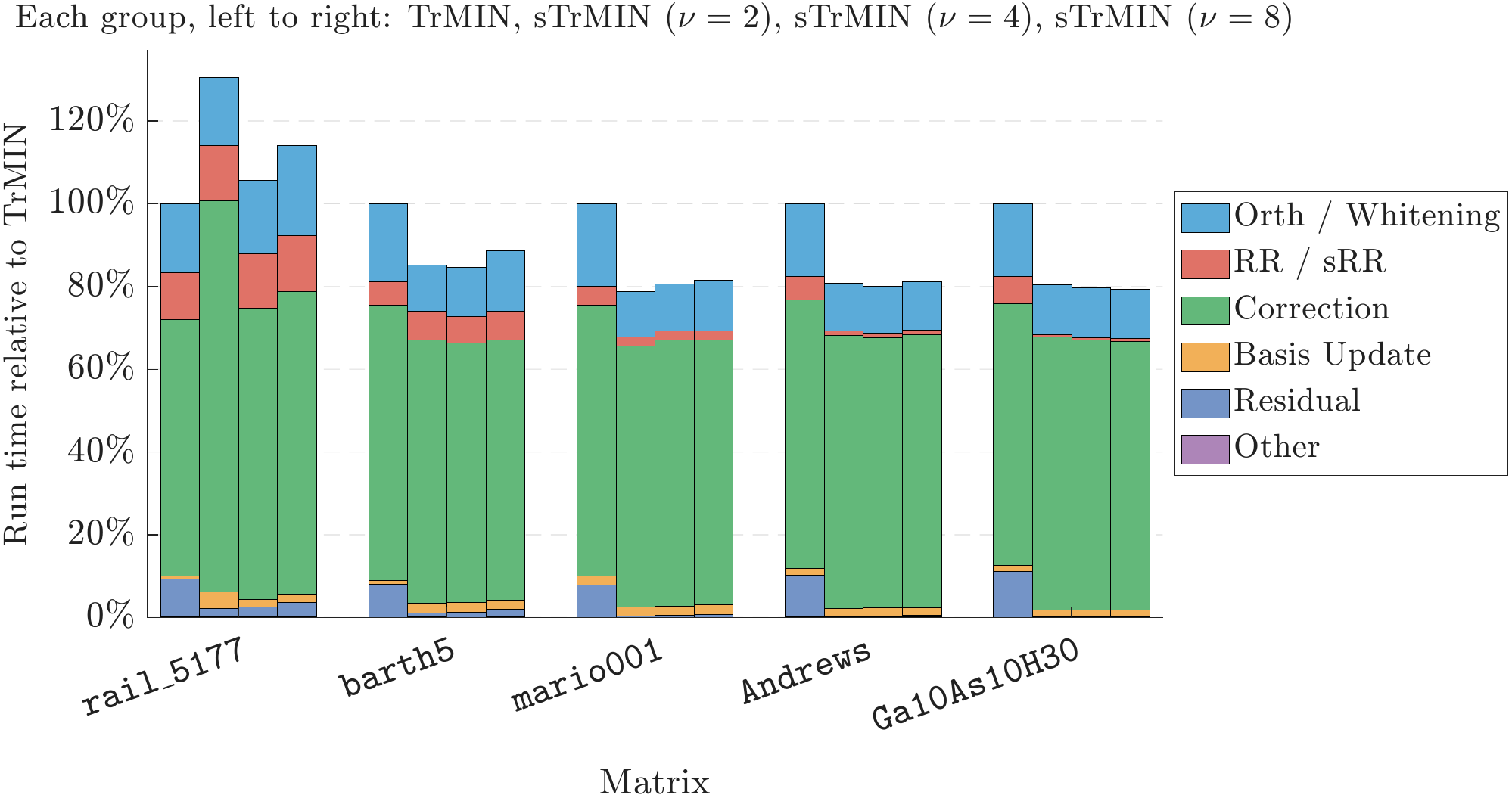}
\caption{Runtime breakdown for \texttt{rail\_5177},
\texttt{barth5}, \texttt{mario001},
\texttt{Andrews} and \texttt{Ga10As10H30}
in Table~\ref{tab:test_matrices}.
Within each group, the bars show TraceMIN and sTraceMIN with
\(\nu=2,4,8\), from left to right.
Both methods compute \(\nev=100\) smallest eigenpairs using
\(\nex=150\) search vectors.
For sTraceMIN, \(\skdim=\nu\nex\) is the sketch dimension.
Each bar represents the run with median total runtime among three
trials and is normalized by the total TraceMIN runtime for the same problem.
``Orth / Whitening'' covers basis orthonormalization for TraceMIN
and whitening for sTraceMIN.
``RR / sRR'' covers RR or sRR procedures.
``Correction'' covers forming the projected correction equations,
solving them by \(5\) GMRES iterations, and reprojecting and applying
the corrections.
``Basis Update'' includes basis transformations and, for sTraceMIN,
updates of the maintained sketches.
For sTraceMIN, ``Orth / Whitening'' and ``RR / sRR'' also include
the final standard QR factorization and RR procedure, respectively.}
\label{fig:TrMIN_ratio_cluster}
\end{figure}

\begin{remark}[Potential savings for sTraceMIN]
It is worth noting that sTraceMIN has the potential for
larger speedups than those observed above.
When solving the correction equations, GMRES repeatedly
applies \(\TrMINP\) in TraceMIN.
These projections typically account for \(20\%\)-\(50\%\) of total time of GMRES.
Replacing \(\TrMINP\) with \(\TrMINPsk\) in sTraceMIN therefore 
is expected to yield large runtime reductions
in sTraceMIN.
For \(\W\in\bbR^{\n\times\nrhs}\), the leading costs of
\(\TrMINP\W\) and \(\TrMINPsk\W\) are \(4\n\nex\nrhs\) and
\(2\n\nex\nrhs+16\n\nrhs+2\skdim\nex\nrhs\) flops, respectively.
Thus, when \(\skdim\ll\n\) and \(\nex \gg 8\), sketching
reduces the leading arithmetic cost of projection by nearly half.

Nevertheless, as shown in Figure~\ref{fig:TrMIN_projection_components},
the reduction in arithmetic operations does not
translate into comparable runtime savings.
The main reason is the relatively high cost of computing
\(\skmat\W\), which requires only \(16\n\nrhs\) flops
but can take as much time as, or even more time than \(\X\trans\W\),
which requires \(2\n\nex\nrhs\) flops.
This is partly due to the low arithmetic intensity
of sparse matrix multiplication, whose performance is often
limited by memory access rather than arithmetic throughput.
Efficient implementations of sparse-sign multiplication
have been developed~\cite{Yaniv_Ghysels_Malik_Boateng_Li_2025}.
Given the arithmetic cost ratio \(8/\nex\), such techniques
may offer a way to make computing \(\skmat\W\)
substantially cheaper than computing \(\X\trans\W\).
This could allow more of the arithmetic savings of the
sketched projections to translate into runtime reductions
in the GMRES correction solves.
\end{remark}

\begin{figure}[tb]
\centering
\includegraphics[width=0.98\linewidth]{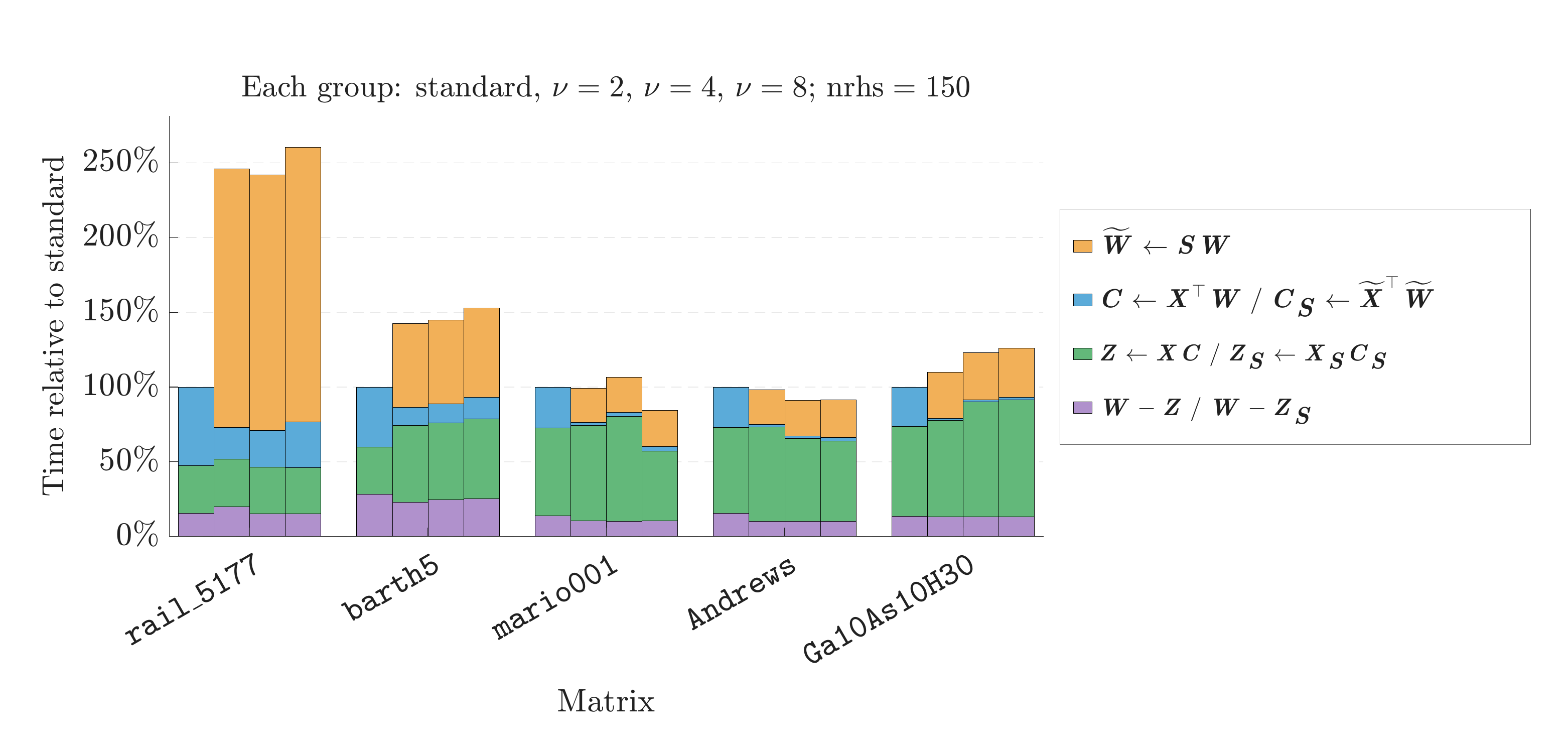}
\caption{Timing components of the standard projection
\(\TrMINP\W=\W-\X(\X\trans\W)\)
and the sketched projection
\(\TrMINPsk\W=\W-\XS(\skX\trans\skmat\W)\),
measured independently of the eigensolvers,
excluding the construction and preparation of the bases
and sketching matrices.
Here, \(\X,\XS\in\bbR^{\n\times\nex}\) span the same subspace and satisfy
\(\X\trans\X=\I_{\nex}\) and \(\skX\trans\skX=\I_{\nex}\),
where \(\skX=\skmat\XS\).
The input is a Gaussian block \(\W\in\bbR^{\n\times\nrhs}\).
We use \(\nex=150\) basis vectors and \(\nrhs=150\) input vectors.
The row dimensions \(\n\) correspond, from left to right, to the
sizes of \texttt{rail\_5177}, \texttt{barth5}, \texttt{mario001},
\texttt{Andrews} and \texttt{Ga10As10H30}
in Table~\ref{tab:test_matrices}.
Each group shows the standard projection followed by the
sketched projections with \(\nu=2,4,8\), where \(\skdim=\nu\nex\).
The sketched coefficient product uses the already computed
\(\tW=\skmat\W\).
Component times are averaged over ten repetitions.
Each sketched bar uses the run with median total projection time
among three trials.
The standard reference uses the run with median total projection
time among three independent standard-projection trials.
All bars are normalized by this reference run's total projection time.}
\label{fig:TrMIN_projection_components}
\end{figure}

\section{Conclusions} \label{sec:conclusions}

In this work, we develop two sketched eigensolvers for real symmetric
eigenvalue problems, sSI and sTraceMIN, by incorporating basis whitening
and sRR into SI and TraceMIN, respectively.
To avoid the complex arithmetic that may arise from the nonsymmetric
projected matrix in sRR, we develop a technique based on the real Schur
decomposition for characteristic subspace selection and residual estimation.
Combined with basis whitening, this framework preserves real
arithmetic and replaces the expensive Euclidean orthogonalization of the search
basis with a cheaper orthogonalization in the sketch space.

On the theoretical side, we provide, to our knowledge, the first
systematic investigation of suitable subspace embedding
assumptions for sketched eigensolvers that use a fixed sketching matrix.
Motivated by this analysis, we propose a local embedding assumption
requiring \(\skmat\) to be a subspace embedding only for the subspace \(\Span\{\V_1\}\) 
associated with the \(\nex\) smallest eigenvalues, 
rather than for all intermediate search subspaces.
Under this embedding assumption,
we establish local convergence of both sSI and sTraceMIN.

Numerical experiments demonstrate that sketching can substantially
reduce the costs of basis orthogonalization, residual evaluation, 
and, potentially, RR process.
Both sSI and sTraceMIN reduce the overall runtime on most converged
test problems, with savings typically
\(10\%\)--\(30\%\) for sSI and \(10\%\)--\(20\%\) for sTraceMIN.
For sSI, the savings exceed \(30\%\) in some cases.
The experiments with different sketch dimensions also show
that relatively small sketches can be effective on the sketched eigensolvers,
which aligns with our theoretical analysis.

Our future work will focus on extending the basis whitening, sRR, and
real Schur framework developed in this paper to a broader class of
eigensolvers, including JD and LOBPCG algorithms.
We also plan to develop a high-performance numerical software library
targeting modern parallel computing architectures.
We expect that more efficient implementations,
e.g., faster routines for applying sparse sign sketching matrices,
could further improve the performance of the proposed algorithms.

\section*{Acknowledgments}
JL thanks the China Scholarship Council~(CSC)
for supporting his visit to The University of Manchester.
YL was supported by the Engineering and Physical
Sciences Research Council [grant number UKRI2774].
We thank Massimiliano Fasi for valuable discussions.
We also acknowledge the use of GPT-6 to help compute the bounds in Theorems~\ref{thm:local_equiv_TraceMIN_sTraceMIN} and~\ref{thm:local_gap_RR_sRR}
and to improve the clarity and readability of the manuscript.
All mathematical results and proofs were independently verified by the authors.

\appendix
\section{Standard RR, SI and TraceMIN Algorithms} \label{app:RR_SI_TraceMIN}

We present the standard RR, SI and TraceMIN algorithms here for reference;
see Algorithms~\ref{alg:RR}, \ref{alg:SI}, and~\ref{alg:TraceMIN}.

\begin{algorithm}[ht!]
  \caption{Rayleigh--Ritz procedure} \label{alg:RR}
  \begin{algorithmic}[1]
    \FUNCTION{\([\Z, \Th] = \RRfunc(\A, \X)\)}
    \REQUIRE{Orthonormal \(\X\)}
    \ENSURE{Orthogonal matrix \(\Z\) and diagonal matrix \(\Th\)}
    \STATE{\(\Ap = \X\trans \A \X\)}
    \STATE{Compute an eigendecomposition \(\Ap = \Z \Th \Z\trans\)}
    \STATE{Sort the diagonal entries of \(\Th\) in nondecreasing order and reorder the columns of \(\Z\) accordingly}
  \end{algorithmic}
\end{algorithm}

\begin{algorithm}[ht!]
  \caption{Subspace iteration} \label{alg:SI}
  \begin{algorithmic}[1]
    \FUNCTION{\([\V, \Lam] = \SIfunc(\A, \X)\)}
    \REQUIRE{SPD matrix \(\A\), initial guess \(\X\)}
    \ENSURE{Approximate eigenpairs \((\V, \Lam)\)}
    \STATE{\([\X, \sim] = \QRfunc(\X)\)}
    \STATE{\([\Z, \Th] = \RRfunc(\A, \X)\)}
    \STATE{\(\X = \X \Z\)}
    \STATE{\(\Rused = \A \X(:, 1 : \nev) - \X(:, 1 : \nev) \Th(1 : \nev, 1 : \nev)\)}
    \STATE{Check convergence using \(\Rused\)}
    \WHILE{not converged}
      \STATE{\(\X \gets \A^{-1} \X\)}
      \STATE{\([\X, \sim] = \QRfunc(\X)\)}
      \STATE{\([\Z, \Th] = \RRfunc(\A, \X)\)}
      \STATE{\(\X = \X \Z\)}
      \STATE{\(\Rused = \A \X(:, 1 : \nev) - \X(:, 1 : \nev) \Th(1 : \nev, 1 : \nev)\)}
      \STATE{Check convergence using \(\Rused\)}
    \ENDWHILE
    \STATE{\(\V = \X(:, 1 : \nev)\), \(\Lam = \Th(1 : \nev, 1 : \nev)\)}
  \end{algorithmic}
\end{algorithm}

\begin{algorithm}[ht!]
  \caption{TraceMIN} \label{alg:TraceMIN}
  \begin{algorithmic}[1]
    \FUNCTION{\([\V, \Lam] = \TrMINfunc(\A, \X)\)}
    \REQUIRE{SPD matrix \(\A\), initial guess \(\X\)}
    \ENSURE{Approximate eigenpairs \((\V, \Lam)\)}
    \STATE{\([\X, \sim] = \QRfunc(\X)\)}
    \STATE{\([\Z, \Th] = \RRfunc(\A, \X)\)}
    \STATE{\(\X = \X \Z\)}
    \STATE{\(\R = \A \X - \X \Th\) and \(\Rused = \R(:, 1 : \nev)\)}
    \STATE{Check convergence using \(\Rused\)}
    \WHILE{not converged}
      \STATE{Construct the projector \(\TrMINP = \I_{\n} - \X \X\trans\)}
      \STATE{Approximately solve \(\TrMINP \A \TrMINP \TrMINDel = \TrMINP \R\) and update \(\TrMINDel = \TrMINP \TrMINDel\)}
      \STATE{\(\X = \X - \TrMINDel\)}
      \STATE{\([\X, \sim] = \QRfunc(\X)\)}
      \STATE{\([\Z, \Th] = \RRfunc(\A, \X)\)}
      \STATE{\(\X = \X \Z\)}
      \STATE{\(\R = \A \X - \X \Th\) and \(\Rused = \R(:, 1 : \nev)\)}
      \STATE{Check convergence using \(\Rused\)}
    \ENDWHILE
    \STATE{\(\V = \X(:, 1 : \nev)\), \(\Lam = \Th(1 : \nev, 1 : \nev)\)}
  \end{algorithmic}
\end{algorithm}

\end{document}